\newtheorem{Theorem}[equation]{Theorem}
\newtheorem{Lemma}[equation]{Lemma}
\newtheorem{Proposition}[equation]{Proposition}
\theoremstyle{definition}
\newtheorem{Remark}[equation]{Remark}
\numberwithin{equation}{section}
\numberwithin{figure}{section}
\newcommand{\C}{{\mathbb C}}
\newcommand{\Z}{{\mathbb Z}}
\newcommand{\Q}{{\mathbb Q}}
\newcommand{\N}{{\mathbb N}}
\newcommand{\mt}[1]{\text{#1}}
\newcommand{\mbf}[1]{\mathbf{#1}}
\begin{document}

\title{A Filtration on Equivariant Borel-Moore Homology}

\author{ARAM BINGHAM, MAHIR BILEN CAN, YILDIRAY OZAN}
%\affil[1]{{mahirbilencan@gmail.com}}

%\date{May 1, 2018}
\maketitle

\begin{abstract}

Let $G/H$ be a homogeneous variety, 
and let $X$ be a $G$-equivariant embedding of $G/H$
such that the number of $G$-orbits in $X$ is finite. 
We show that the equivariant Borel-Moore homology of $X$ 
has a filtration with associated graded module the direct sum of the 
equivariant Borel-Moore homologies of the $G$-orbits. 
If $T$ is a maximal torus of $G$ such that each $G$-orbit 
has a $T$-fixed point, then the equivariant filtration 
descends to give a filtration on the ordinary Borel-Moore homology 
of $X$. We apply our findings to certain wonderful compactifications 
as well as to double flag varieties. 

\vspace{.5cm}

\noindent 
\textbf{Keywords:} Borel-Moore homology, (higher) Chow groups, wonderful varieties, 
double flag varieties\\

\noindent 
\textbf{MSC:} 14C15, 14M27 
\end{abstract}

\normalsize

\section{Introduction}

Let $G$ be a connected complex algebraic group, and let $K$ be a closed subgroup.
A $G$-variety $X$ is said to be a {\em $G$-equivariant embedding of $G/K$} if there is 
an open $G$-orbit $O$ in $X$ such that the stabilizer subgroup of a point from $O$ is isomorphic to $K$. 
In this paper we are concerned with the (equivariant) Borel-Moore homology groups of $X$ under the assumption 
that $X$ contains only finitely many $G$-orbits. 
Let us denote by $A^G_*(X)_\Q$ the rational $G$-equivariant Chow group of $X$,
and let us denote by $H^G_*(X,\Q)$ the rational $G$-equivariant Borel-Moore homology group of $X$. 
Then there is a degree doubling cycle map $cl^G_X : A^G_*(X)_\Q \rightarrow H^G_*(X,\Q)$,
see~\cite{EdidinGraham}.
In this notation, the main general result of our paper is the following.

\begin{Theorem}\label{I:T1}
Let $G$ be a complex connected linear algebraic group, and let $X$ 
be a complex $G$-variety. 
If $X$ is comprised of finitely many $G$-orbits, then 
$cl^G_X$ is an isomorphism between $A^G_*(X)_\Q$ and $H^G_*(X,\Q)$. 
Furthermore, as a $H^*_G(pt,\Q)$-module, the equivariant Borel-Moore homology 
has a decomposition, 
$
H^G_*(X,\Q) \cong H^G_*(Y,\Q) \oplus H^G_*(U,\Q),
$
where $Y$ is any closed orbit in $X$, and $U$ is the complement of $Y $ in $X$. 
Consequently, the equivariant Borel-Moore homology of $X$ has a 
filtration, as a graded module over $H^*_G(pt,\Q)$, with associated graded module equal to 
the direct sum of equivariant Borel-Moore homologies of the $G$-orbits in $X$. 
\end{Theorem}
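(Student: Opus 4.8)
The plan is to prove the whole theorem at once, by induction on the number $m$ of $G$-orbits in $X$, with inductive hypothesis: $cl^G_X$ is an isomorphism and $H^G_*(X,\Q)$ is concentrated in even degrees. The main tools are the two localization sequences, which fit into a commutative ladder via the cycle maps (see \cite{EdidinGraham}, using the finite-dimensional approximation that defines equivariant Borel--Moore homology): for a $G$-stable closed $Z \subseteq X$ with open complement $V = X \setminus Z$, the right exact sequence $A^G_*(Z)_\Q \to A^G_*(X)_\Q \to A^G_*(V)_\Q \to 0$ and the long exact sequence $\cdots \to H^G_n(Z,\Q) \to H^G_n(X,\Q) \to H^G_n(V,\Q) \xrightarrow{\partial} H^G_{n-1}(Z,\Q) \to \cdots$. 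For the base case $m=1$ we have $X = G/H$, and there are degree-shifted identifications $A^G_*(G/H)_\Q \cong A^*(BH)_\Q$ and $H^G_*(G/H,\Q) \cong H^*(BH,\Q)$ under which $cl^G_{G/H}$ becomes the ordinary cycle map $A^*(BH)_\Q \to H^{2*}(BH,\Q)$; so the base case reduces to the facts that $H^{\mathrm{odd}}(BH,\Q) = 0$ and that this cycle map is an isomorphism, which one checks by replacing $H$ by its identity component (it has finitely many components), then by a Levi subgroup $L$ (contractible unipotent radical), where $H^*(BL,\Q) \cong \mathrm{Sym}(\mathfrak{t}_L^*)^{W_L}_\Q \cong A^*(BL)_\Q$, and finally passing to $\pi_0(H)$-invariants.

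For the inductive step, pick a closed $G$-orbit $Y \subseteq X$ --- an orbit of minimal dimension is closed --- and set $U = X \setminus Y$, an open $G$-stable subvariety with $m-1$ orbits. By the inductive hypothesis $cl^G_U$ is an isomorphism and both $H^G_*(Y,\Q)$ and $H^G_*(U,\Q)$ are concentrated in even degrees. Hence for every $n$ either $H^G_n(U,\Q)$ or $H^G_{n-1}(Y,\Q)$ vanishes, so the connecting map $\partial$ is zero and the Borel--Moore localization sequence splinters into short exact sequences $0 \to H^G_n(Y,\Q) \to H^G_n(X,\Q) \to H^G_n(U,\Q) \to 0$. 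In particular $H^G_*(X,\Q)$ is even-concentrated, being squeezed between two even-concentrated modules; and a short diagram chase in the ladder --- using that the Borel--Moore row is now \emph{short} exact, hence in particular left exact, that $cl^G_Y$ and $cl^G_U$ are isomorphisms, and that the Chow row is right exact and exact at the middle term --- gives both surjectivity and injectivity of $cl^G_X$. This closes the induction and establishes the first two assertions, the argument being the same for any choice of closed orbit $Y$.

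For the filtration, order the orbits $O_1, \dots, O_m$ so that $X_j := O_1 \cup \cdots \cup O_j$ is closed for each $j$ --- possible since a variety with finitely many orbits always contains a closed orbit, applied repeatedly to $X \setminus X_{j-1}$ --- so that $X_j \setminus X_{j-1} = O_j$. Feeding the pairs $X_{j-1} \subseteq X_j$ into the previous step produces short exact sequences $0 \to H^G_*(X_{j-1},\Q) \to H^G_*(X_j,\Q) \to H^G_*(O_j,\Q) \to 0$ of graded $H^*_G(pt,\Q)$-modules in which the first map is the (injective) proper pushforward; composing these inclusions identifies $H^G_*(X_j,\Q)$ with a graded $H^*_G(pt,\Q)$-submodule $F_j \subseteq H^G_*(X,\Q)$, and $0 = F_0 \subseteq F_1 \subseteq \cdots \subseteq F_m = H^G_*(X,\Q)$ is the desired filtration, with $F_j / F_{j-1} \cong H^G_*(O_j,\Q)$. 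The sharper direct sum decomposition $H^G_*(X,\Q) \cong H^G_*(Y,\Q) \oplus H^G_*(U,\Q)$ amounts to splitting the length-two short exact sequence above as $H^*_G(pt,\Q)$-modules; here the plan is to produce a section on the Chow side --- lift $H^*_G(pt,\Q)$-module generators of $A^G_*(U)_\Q$ to the classes of their closures in $X$, and verify that the module relations are respected --- and then transport it across the (now known) cycle isomorphisms.

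The step I expect to be the real obstacle is the parity statement: that $H^G_*(O,\Q)$ is concentrated in even degrees for every orbit $O$, and that this property survives the induction. Everything else rests on it --- it is exactly this purity that kills the Borel--Moore connecting homomorphisms, and thereby simultaneously yields the isomorphism $cl^G_X$ and the short exact sequences out of which the filtration is assembled. A second, more technical point is the splitting behind the explicit direct sum decomposition, since $H^G_*(U,\Q)$ need not be a free $H^*_G(pt,\Q)$-module and so the splitting cannot be produced purely formally but must be built by hand from orbit-closure classes.
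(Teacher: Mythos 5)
Your overall strategy---induction on the number of orbits, the base case $G/H$ handled via $A_*(BH)_\Q \cong H_*(BH,\Q)$ (Edidin--Graham building on Totaro), and the inductive step run through the two localization sequences linked by the cycle maps---is exactly the paper's proof. Your version is in one respect more careful: you make explicit that it is the even-concentration of $H^G_*(Y,\Q)$ and $H^G_*(U,\Q)$, propagated through the induction, that forces the connecting homomorphisms to vanish and splinters the Borel--Moore sequence into short exact sequences, whereas the paper only gestures at ``the commuting of the squares.'' Your diagram chase for the bijectivity of $cl^G_X$ (surjectivity of $i^*$ on Chow groups plus injectivity of the Borel--Moore $j_*$) and your assembly of the filtration from the chain of closed $G$-stable subsets $X_1\subseteq\cdots\subseteq X_m$ are both correct and coincide with what the paper does; the paper simply iterates the two-step decomposition on $U$ instead of building up from below.

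The one genuine gap is the point you yourself flag: the decomposition $H^G_*(X,\Q)\cong H^G_*(Y,\Q)\oplus H^G_*(U,\Q)$ \emph{as $H^*_G(pt,\Q)$-modules}. Your proposed section---send module generators of $A^G_*(U)_\Q$ to the classes of their closures in $X$---is not a module homomorphism, and in fact no module splitting exists in general. Take $G=\mathbf{SL}_2$ acting on $X=\A^2$, with $Y=\{0\}$ and $U=\A^2\setminus\{0\}\cong \mathbf{SL}_2/U_1$ for $U_1$ the upper-triangular unipotent subgroup. Then $H^G_*(X,\Q)$ and $H^G_*(Y,\Q)$ are free of rank one over $H^*_G(pt,\Q)\cong\Q[c]$ with $\deg c=4$, while $H^G_*(U,\Q)\cong H^*(BU_1,\Q)\cong\Q$ sits in a single degree and is annihilated by $c$; so $H^G_*(Y,\Q)\oplus H^G_*(U,\Q)$ has $c$-torsion and cannot be isomorphic to the torsion-free module $H^G_*(X,\Q)$. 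Concretely, $c\cdot[\,\overline{U}\,]=c\cdot[\A^2]=[0]\neq 0$ by the self-intersection formula, while $c\cdot[U]=0$, which is exactly why the closure map fails to respect the module structure. You should therefore claim the middle assertion only as an isomorphism of graded $\Q$-vector spaces (equivalently, record the short exact sequence of modules); that is all the localization argument gives, it is all the paper's own proof actually establishes at that step, and it is all that is needed for the filtration statement and for the downstream applications in Theorems~\ref{I:T2}--\ref{I:T4}.
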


%\subsection{} 

In our second result, we make an additional assumption so that we have an
analogous result for the non-equivariant Borel-Moore homology groups.

\begin{Theorem}\label{I:T2}
Let $X$ and $G$ be as in Theorem~\ref{I:T1}, and let $T$ be a maximal torus of $G$.
If each $G$-orbit in $X$ contains a $T$-fixed point, then $X$ has zero cohomology in odd degree. 
Furthermore, the Borel-Moore homology of $X$ has a filtration 
as a graded $\Q$-vector space with associated graded 
space equal to the direct sum of Borel-Moore homologies of $G$-orbits. 
\end{Theorem}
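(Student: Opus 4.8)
The plan is to bootstrap from the equivariant statement of Theorem~\ref{I:T1} by running the same orbit-by-orbit induction, but now tracking the non-equivariant groups via the standard comparison between $H^G_*(X,\Q)$ and $H_*(X,\Q)$. I would proceed by induction on the number of $G$-orbits in $X$. The base case is a single homogeneous space $G/K$ that contains a $T$-fixed point; here $G/K$ is (equivariantly) formal and has no odd cohomology because the $T$-fixed point forces $G/K$ to have a paving-type structure — concretely, a $G$-orbit with a $T$-fixed point is $T$-equivariantly a point-plus-affine-cells situation in the sense that $H^*(G/K,\Q)$ is concentrated in even degrees (e.g. a $G$-orbit with a $T$-fixed point is a homogeneous space for which $K\supseteq$ a maximal torus up to conjugacy, hence $G/K$ is a flag-type variety with an algebraic cell decomposition). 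The inductive step uses the closed orbit $Y\subset X$ with open complement $U$: the long exact sequence in Borel-Moore homology of the pair $(U,Y)$,
\[
\cdots \to H_n(Y,\Q) \to H_n(X,\Q) \to H_n(U,\Q) \xrightarrow{\partial} H_{n-1}(Y,\Q) \to \cdots,
\]
together with the inductive hypothesis that $U$ has no odd cohomology (equivalently, via Poincaré–Lefschetz/Borel-Moore duality on the equidimensional strata, no odd Borel-Moore homology in the appropriate sense) and $Y$ has none either, forces the connecting maps $\partial$ to vanish, so the sequence breaks into short exact sequences $0\to H_n(Y,\Q)\to H_n(X,\Q)\to H_n(U,\Q)\to 0$. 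Iterating over all orbits assembles the desired filtration whose associated graded is $\bigoplus_{\text{orbits }\mc O} H_*(\mc O,\Q)$.

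The mechanism that makes the induction go through is equivariant formality: I would show that under the $T$-fixed-point hypothesis, $X$ is equivariantly formal for the $T$-action, i.e. $H^*_T(X,\Q)\cong H^*(X,\Q)\otimes H^*_T(pt,\Q)$ and $H^*(X,\Q)$ is concentrated in even degrees. This is where Theorem~\ref{I:T1} does the real work: the equivariant Borel-Moore homology $H^G_*(X,\Q)$ (equivalently $H^T_*(X,\Q)$, since $G/T$ is a product of cells so $H^*_G(pt)\to H^*_T(pt)$ and the corresponding spaces compare cleanly) decomposes as a direct sum of the $H^T_*(\mc O,\Q)$ over orbits $\mc O$, each of which is a free $H^*_T(pt,\Q)$-module concentrated in even degrees (because $\mc O$ has a $T$-fixed point, so $H^T_*(\mc O,\Q)$ is free and even). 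A direct sum of free even modules is free and even, so $H^T_*(X,\Q)$ is a free $H^*_T(pt,\Q)$-module concentrated in even degrees; specializing (setting the generators of $H^*_T(pt,\Q)$ to zero, via the spectral sequence $H^*_T(pt,\Q)\otimes H_*(X,\Q)\Rightarrow H^T_*(X,\Q)$ which must degenerate for dimension/parity reasons) yields that $H_*(X,\Q)$ itself is concentrated in even degrees — equivalently $X$ has no odd cohomology.

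I expect the main obstacle to be the bookkeeping around passing from $G$-equivariant to $T$-equivariant to non-equivariant groups in a way that is compatible with the orbit filtration. Specifically, one must check that the decomposition of Theorem~\ref{I:T1} holds $T$-equivariantly (not just $G$-equivariantly) — this should follow because the proof of Theorem~\ref{I:T1} only used that orbit complements give split short exact sequences, and the same long-exact-sequence argument runs for any group, but one needs each $T$-orbit-stratum — really each $G$-orbit viewed as a $T$-variety — to have $H^T_*$ free, which is exactly the $T$-fixed-point hypothesis. The second delicate point is the degeneration of the spectral sequence relating $H^T_*(X,\Q)$ to $H_*(X,\Q)$: this is automatic once we know freeness and even-degree concentration of $H^T_*(X,\Q)$, but I would state it carefully, since it is the step that converts the equivariant decomposition into the non-equivariant filtration and simultaneously delivers the vanishing of odd cohomology. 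The rest — assembling the filtration from the iterated short exact sequences — is formal.
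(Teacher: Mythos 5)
Your main mechanism is the same as the paper's: each orbit with a $T$-fixed point has vanishing odd rational cohomology, hence is equivariantly formal (the Leray spectral sequence of $EG\times_G Z\to BG$ degenerates and the equivariant group is free over $H^*_G(pt,\Q)$); the decomposition of Theorem~\ref{I:T1} then makes $H^G_*(X,\Q)$ a direct sum of free, evenly-graded modules, hence itself free and even; and reducing modulo the augmentation ideal $(S^W_+)$ turns the equivariant filtration into the stated non-equivariant one. The paper does exactly this, working with $G$-equivariant groups throughout rather than passing through $T$-equivariant groups, so your comparison step $H^G_*\leftrightarrow H^T_*$ is an unnecessary (though harmless, rationally) detour. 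Your first thread --- the direct induction on the number of orbits using the non-equivariant localization sequence, where odd-degree vanishing of $H^{BM}_*(Y,\Q)$ and $H^{BM}_*(U,\Q)$ forces all connecting maps to vanish --- is a valid and somewhat more elementary route to the filtration that the paper does not take explicitly; it buys you the non-equivariant conclusion without invoking freeness and specialization, at the cost of having to invoke Poincar\'e duality on each (smooth) orbit to pass between cohomology and Borel--Moore homology.

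The one genuine weak point is your justification of the base case. The assertion that a $G$-orbit containing a $T$-fixed point is ``a flag-type variety with an algebraic cell decomposition'' is not correct: such an orbit is $G/K$ with $T\subseteq K$ up to conjugacy, and $K$ need not be parabolic (e.g.\ $K=T$ itself, or a reductive subgroup of maximal rank), so $G/K$ is in general neither projective nor obviously cellular. What you actually need --- and what is true --- is only that $\operatorname{rk}K=\operatorname{rk}G$ forces $H^{\mathrm{odd}}(G/K,\Q)=0$. The paper isolates this as Lemma~\ref{L} and proves it by reducing to $K^0$ and applying Leray--Hirsch to the fibration $K^0/T\to G/T\to G/K^0$, using that $H^*(K^0/T)$ is generated by Chern classes of line bundles that extend to $G/T$; alternatively one can cite \cite[Corollary~12]{Brion98}. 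Replace your parenthetical heuristic with that argument and both of your threads close up; everything else in your proposal is sound.
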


%For applications of these results we will consider 1) spherical varieties, 2) double flag varieties. 
A normal irreducible algebraic variety $X$ on which a connected reductive algebraic group $G$ acts 
with a morphism $G\times X \to X$ is called a {\em spherical $G$-variety} 
if a Borel subgroup $B$ of $G$ has an open orbit in $X$. 
It is well-known that a spherical variety $X$ has only finitely many $G$-orbits in it, see~\cite{LunaVust}. 
In fact, $X$ is a spherical $G$-variety if and only if there are only finitely many $B$-orbits in $X$,
see~\cite{Vinberg:Complexity,Brion86}. 
Therefore, Theorem~\ref{I:T1} is applicable to any spherical $G$-variety.
A particularly interesting subset of the set of spherical varieties consists of {\em wonderful varieties},
defined by the following properties: 
\begin{enumerate}
\item $X$ is a smooth projective $G$-variety with an open $G$-orbit $X_0$;
\item $X\setminus X_0 = \cup_{i=1}^l D_i$, where $D_i$'s are smooth prime 
$G$-divisors with normal crossings and $\cap_{i=1}^l D_i \neq \emptyset$; 
\item if $x$ and $x'$ are two points from $X$ such that 
$\{ i: x\in D_i \} = \{ i : x'\in D_i \}$, then $G\cdot x=G\cdot x'$.
\end{enumerate}
The divisors $D_i$ ($1\leq i \leq l$) are called the {\em boundary divisors}.
Note that the intersection of all boundary divisors is a closed subset of $X$,
therefore, it is isomorphic to a flag variety $G/Q$ for some parabolic subgroup $Q$ in $G$.

%The base ring for the filtration in Theorem~\ref{I:T2} is $H^*_G(pt,\Q)$.
%In our last result, we will replace it with an algebra of coinvariants. 

%As an application of our previous theorem(s), we have the following result.
Our third result is an application of the previous theorem. 
\begin{Theorem}\label{I:T3}
Let $H$ be a closed subgroup of $G$ such that $G/H$ is isomorphic to the open $G$-orbit 
in a wonderful variety $X$. 
Let \hbox{$\{ Y_0 \cong G/H,Y_1,\dots, Y_{r}\}$} be the set of all $G$-orbits in $X$,
where the closed orbit, $Y_r$ is isomorphic to the flag variety $Y_{r} \cong G/Q$ 
for some parabolic subgroup $Q$. 
If the ranks of $G$ and $H$ are equal, 
then there is a filtration of the Borel-Moore homology $H_*(X,\Q)$, 
as a graded module over $\Q$, with the associated
graded module equal to the direct sum of Borel-Moore homology groups of the $G$-orbits in $X$.
\end{Theorem}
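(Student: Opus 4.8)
The plan is to deduce Theorem~\ref{I:T3} from Theorem~\ref{I:T2} by verifying its hypothesis: that every $G$-orbit in the wonderful variety $X$ contains a $T$-fixed point for a suitably chosen maximal torus $T$ of $G$. Once this is established, Theorem~\ref{I:T2} immediately produces the desired filtration on $H_*(X,\Q)$ with associated graded the direct sum of $H_*(Y_i,\Q)$, so the entire content of the proof is the $T$-fixed point condition.

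First I would fix a maximal torus $T$ of $H$; since $\operatorname{rank} G = \operatorname{rank} H$, the torus $T$ is also a maximal torus of $G$. The open orbit $Y_0 \cong G/H$ then visibly contains the $T$-fixed point $eH$, because $T \subseteq H$. For the closed orbit $Y_r \cong G/Q$, a flag variety always has $T$-fixed points (the $T$-fixed points of $G/Q$ are in bijection with $W/W_Q$, and in particular the coset $eQ$ is fixed once we choose $T$ inside a Borel contained in $Q$, which we may arrange since all maximal tori are conjugate and the structural description of $X$ only depends on $Q$ up to conjugacy). The substantive step is the intermediate orbits $Y_1,\dots,Y_{r-1}$. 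Here I would use the local structure theory of wonderful (more generally, spherical) varieties: each $G$-orbit closure $\ol{Y_i}$ is itself a spherical $G$-variety, and by Luna's local structure theorem there is a $T$-stable affine open subset meeting $Y_i$; alternatively, one invokes the Bialynicki-Birula–type fact that a normal variety with an action of a connected solvable group always has a fixed point on any complete orbit closure, but the orbits themselves need not be complete, so more care is needed. The cleanest route is: the stabilizer $H_i$ of a point in $Y_i$ contains (a conjugate of) a maximal torus of $G$ whenever $H$ does, because in a wonderful variety each orbit $Y_i$ fibers $G$-equivariantly, and the generic isotropy groups of the intermediate orbits are obtained from $H$ by a degeneration that preserves the maximal torus; concretely, one shows $H_i \supseteq T$ up to conjugation, hence $Y_i$ has a $T$-fixed point.

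The key steps, in order, are: (1) choose $T$ to be a maximal torus of $H$, which by the rank hypothesis is maximal in $G$; (2) observe $eH \in Y_0^T$ and handle $Y_r = G/Q$ directly; (3) for the intermediate orbits, invoke the local structure theory of wonderful varieties to show each $Y_i$ meets a $T$-stable affine chart and contains a $T$-fixed point — equivalently, show the isotropy group of a suitable point of $Y_i$ contains a maximal torus conjugate to $T$; (4) apply Theorem~\ref{I:T2} to conclude.

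The main obstacle I expect is step (3): proving that every intermediate $G$-orbit in $X$ contains a $T$-fixed point. For the open and closed orbits this is elementary, but the intermediate orbits require genuine input from the geometry of wonderful compactifications. I anticipate the argument will run through the description of $X$ near the unique closed orbit, where there is a $T$-stable affine open chart $X_0' \subseteq X$ isomorphic to an affine space on which $T$ acts linearly, such that $X_0'$ meets every $G$-orbit; the $T$-fixed points in this chart are then the coordinate subspaces cut out by vanishing of the "boundary" coordinates, and each orbit $Y_i$, being distinguished by which boundary divisors $D_j$ contain it, meets the corresponding $T$-fixed stratum. Making this precise — and in particular checking that the rank hypothesis $\operatorname{rank} G = \operatorname{rank} H$ is exactly what guarantees the relevant $T$-weights behave correctly so that fixed points survive in each stratum — is where the real work lies; everything else is bookkeeping plus a citation to Theorem~\ref{I:T2}.
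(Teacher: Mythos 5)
Your overall strategy coincides with the paper's: reduce Theorem~\ref{I:T3} to Theorem~\ref{I:T2} by verifying that every $G$-orbit in $X$ contains a $T$-fixed point, with the open orbit handled by the rank hypothesis (take $T$ to be a maximal torus of $H$, hence of $G$) and the closed orbit $G/Q$ handled trivially. The problem is that the one step carrying all of the content --- the intermediate orbits $Y_1,\dots,Y_{r-1}$ --- is never actually proved. You assert that ``the generic isotropy groups of the intermediate orbits are obtained from $H$ by a degeneration that preserves the maximal torus'' and that each $Y_i$ ``meets the corresponding $T$-fixed stratum'' of a $T$-stable affine chart, and you explicitly defer ``the real work.'' As written this is a gap, not a proof. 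Moreover, the concrete mechanism you sketch would fail: in the local structure theorem for a wonderful variety the chart is $R_u(P)\times Z$ with $Z\cong \A^l$ on which $T$ acts through the spherical roots, which are nontrivial characters, so the only $T$-fixed point of that chart is the base point lying in the closed orbit. The coordinate subspaces indexing the intermediate orbits are $T$-stable but not pointwise $T$-fixed, so the fixed points you need are not where you propose to find them.

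The paper closes exactly this gap in Proposition~\ref{P:break it}: it uses Batyrev--Moreau's description (Lemma~\ref{L:BM}) of the isotropy groups of points in the orbits of a simple toroidal embedding via the satellites $H_{I(\sigma)}$, combined with the Brion--Peyre containment $H_I^{red}\subset H$ (Remark~\ref{R:toroidal}), to show that the stabilizer of a generic point of every $G$-orbit closure contains, up to conjugacy, the Levi subgroup $L_Q$ of $Q$, and hence a maximal torus of $G$; the $T$-fixed point condition of Theorem~\ref{I:T2} then follows for every orbit. Some input of this kind from the structure theory of spherical embeddings is unavoidable; your step (3) needs to be replaced by such an argument rather than a statement that one should exist.
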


Let $G$ be a connected reductive group, let $T$ be a maximal torus in $G$,
and let $B$ be Borel subgroup such that $T\subset G$. 
Let $Q$ be a parabolic subgroup containing $B$. 
It is well-known that the cohomology ring of a (partial) flag variety $G/Q$ is isomorphic 
to the $W_Q$-invariants in the ring $H^*(G/B)$, where $W_Q$ is the Weyl group of (a Levi factor of) $Q$.
Another application of our theorem concerns the tensor products of vector spaces of the form $H^*(G/Q,\Q)$.
Let $W$ denote the Weyl group, $N_G(T)/T$, and 
let $\Delta$ denote the set of simple roots determined by $(B,T)$.
%Then the parabolic subgroups containing $B$ are determined by the subsets of $\Delta$. 
Let $I$ and $J$ be two subsets from $\Delta$. We denote by $P_I$ (resp. by $P_J$)
the parabolic subgroup corresponding to $I$ (resp. to $J$), and we denote by 
$W_I$ (resp. by $W_J$) the Weyl group of $P_I$ (resp. of $P_J$). 
The Weyl groups $W_I$ and $W_J$ are subgroups of $W$. 
For $w$ in $W$, let us denote by $\dot{w}$ an element of $N_G(T)$ 
representing $w$.  
Finally, let us denote by ${}^IW^J$ the subset of $W$ consisting of element $w\in W$ such that if $v\in W_I w W_J$, then 
the minimum number of simple reflections required to write $v$ as a product is greater than 
or equal to that of $w$.

\begin{Theorem}\label{I:T4}
Let $P_I$ and $P_J$ be two standard parabolic subgroups of a connected reductive subgroup $G$,
and let $X$ denote the double flag variety $X:=G/P_I\times G/P_J$ on which $G$ acts diagonally. 
Then every $G$-orbit in $X$ contains a $T$-fixed point, and furthermore, 
as a $\Q$-vector space, the cohomology ring of $X$ decomposes into a direct sum of cohomology rings as follows:
\hbox{$H^*(G/P_I\times G/P_J,\Q) \cong \bigoplus_{w\in{}^IW^J} H^*(G/\textrm{Stab}_G(\dot{w}),\Q)$}.
\end{Theorem}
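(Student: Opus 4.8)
The plan is to apply Theorem~\ref{I:T2} to the double flag variety $X = G/P_I \times G/P_J$, which requires two things: identifying the $G$-orbits on $X$ and their stabilizers, and verifying that each orbit contains a $T$-fixed point. First I would recall the classical parametrization of $G$-orbits on $X$: by the Bruhat decomposition, $G\backslash(G/P_I \times G/P_J)$ is in bijection with $P_I\backslash G / P_J$, which in turn is indexed by the double coset space $W_I\backslash W/W_J$; the set ${}^IW^J$ of minimal-length double coset representatives gives a canonical system of representatives. For $w\in{}^IW^J$, the corresponding orbit is $G\cdot(eP_I, \dot{w}P_J)$, and its stabilizer is $P_I \cap \dot{w}P_J\dot{w}^{-1}$, so the orbit is isomorphic to $G/(P_I\cap\dot{w}P_J\dot{w}^{-1})$. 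The notation $\mathrm{Stab}_G(\dot{w})$ in the statement should be read as this intersection $P_I\cap\dot{w}P_J\dot{w}^{-1}$, the stabilizer of the point $(eP_I,\dot{w}P_J)$.

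Next I would check the $T$-fixed point hypothesis of Theorem~\ref{I:T2}. The point $(eP_I, \dot{w}P_J) \in G/P_I\times G/P_J$ is fixed by $T$ precisely because $eP_I$ is $T$-fixed in $G/P_I$ (as $T\subseteq P_I$) and $\dot{w}P_J$ is $T$-fixed in $G/P_J$ (since $\dot{w}^{-1}T\dot{w} = T \subseteq P_J$, so $T\dot{w}P_J = \dot{w}\dot{w}^{-1}T\dot{w}P_J = \dot{w}P_J$). Hence every $G$-orbit on $X$ meets the $T$-fixed locus, and Theorem~\ref{I:T2} applies: $X$ has no odd cohomology and $H_*(X,\Q)$ carries a filtration whose associated graded is $\bigoplus_{w\in{}^IW^J} H_*(G/(P_I\cap\dot{w}P_J\dot{w}^{-1}),\Q)$.

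To pass from Borel-Moore homology to cohomology and from the filtered statement to a genuine direct sum, I would use that $X$ is smooth and projective (a product of flag varieties), so Poincaré duality identifies $H_*(X,\Q)$ with $H^*(X,\Q)$ up to a degree shift; similarly each orbit closure contributes, but more directly, since all the relevant spaces have cohomology concentrated in even degrees, the filtration on $H_*(X,\Q)$ splits as graded vector spaces, giving $H^*(X,\Q) \cong \bigoplus_{w\in{}^IW^J} H^*(G/\mathrm{Stab}_G(\dot{w}),\Q)$ as $\Q$-vector spaces. The vanishing of odd cohomology for each orbit $G/(P_I\cap\dot{w}P_J\dot{w}^{-1})$ — which one needs to know to conclude the splitting is clean — follows because these homogeneous spaces, being orbits of a reductive group containing $T$ with isotropy of full rank, themselves have even-only cohomology (they admit Bialynicki-Birula-type cell decompositions from a generic one-parameter subgroup of $T$); this is implicitly what Theorem~\ref{I:T2} already exploits.

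The main obstacle I anticipate is purely bookkeeping rather than conceptual: one must be careful that ${}^IW^J$ is indeed a set of representatives for $W_I\backslash W/W_J$ and that the stabilizer computation $P_I\cap\dot{w}P_J\dot{w}^{-1}$ does not depend on the choice of representative $\dot{w}\in N_G(T)$ of $w$ in a way that affects the isomorphism type — different lifts differ by an element of $T\subseteq P_I\cap\dot{w}P_J\dot{w}^{-1}$, so this is harmless. The only other point requiring a word of care is confirming that the product $G/P_I\times G/P_J$ has finitely many $G$-orbits (so that Theorems~\ref{I:T1} and~\ref{I:T2} apply at all); this is immediate from the finiteness of $P_I\backslash G/P_J$, itself a consequence of the Bruhat decomposition. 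Everything else is a direct invocation of Theorem~\ref{I:T2}.
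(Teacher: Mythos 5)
Your proposal is correct and takes essentially the same route as the paper: parametrize the $G$-orbits by ${}^IW^J$ via the Bruhat decomposition, observe that each orbit contains a $T$-fixed point, and then invoke Theorem~\ref{I:T2} together with the K\"unneth formula. Your explicit identification of the stabilizer as $P_I\cap\dot{w}P_J\dot{w}^{-1}$ and the direct verification that $(eP_I,\dot{w}P_J)$ is $T$-fixed are, if anything, slightly more careful than the paper's treatment.
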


\begin{comment}
\subsection{} 
The structure of our paper is as follows.
In the preliminaries section, we summarize some basic properties 
of the equvariant and ordinary Chow groups as well as Borel-Moore homology groups. 
In Sections~\ref{S:3}--\ref{S:5}, we prove the theorems that are stated above. 
To demonstrate these results on an example, in the final Section~\ref{S:6}, 
we study the wonderful compactification of the symmetric space 
$\mbf{E}_{p,q} := \mbf{SL}_n/ \mbf{S}( \mbf{GL}_p\times  \mbf{GL}_q)$ (for $p,q\in \N,\ p+q=n$),
where the subgroup $\mbf{S}( \mbf{GL}_p\times  \mbf{GL}_q)$ consists of 
block matrices of the form 
$\begin{bmatrix} g_{11} & 0  \\ 0 & g_{22} \end{bmatrix}$
with $g_{11}\in \mbf{GL}_p$, $g_{22}\in \mbf{GL}_q$, and $\det g_{11} \det g_{22} = 1$.
\end{comment}

\vspace{.25cm}
The structure of our paper is as follows.
In the preliminaries section, we summarize some basic properties 
of the equvariant Chow groups and the equivariant Borel-Moore homology groups. 
In Sections~\ref{S:3}--\ref{S:5}, we will prove the theorems that are stated above.

\vspace{.25cm}
\textbf{Acknowledgements.} The second author is partially supported
by a grant from Louisiana Board of Regents.
We thank Slawomir Kwasik and \"Ozlem U\v{g}urlu for useful conversations.
We are grateful to Michel Brion for his constructive criticisms and comments. 
Finally, we thank the referee her/his comments and suggestions which improved the quality of our paper.

\section{Notation and Preliminaries}\label{S:Preliminaries}

Throughout this paper, $G$ will stand for a connected algebraic group defined over $\C$. 
We will often assume that $G$ is reductive, and we will use the letter $B$ to denote a Borel subgroup of $G$.
We will use $T$ to denote a maximal torus that is contained in $B$. 
Then $B = T \ltimes U$, where $U$ is the unipotent radical of $B$. 
We will denote the Weyl group of $(G,T)$ by the letter $W$.
Then $W= N_G(T)/T$, where $N_G(T)$ is the normalizer of $T$ in $G$.

\subsection{(Equivariant) Chow Groups}

The equivariant Chow groups and the equivariant Borel-Moore homology 
of a $G$-space are introduced by Edidin and Graham in~\cite{EdidinGraham}.
In~\cite{BrionChow}, by focusing on the torus actions, Brion constructed 
machinery for the applications of equivariant Chow groups. Among the
results of Brion is a presentation of 
the equivariant Chow groups via invariant cycles, which establishes 
a connection between the equivariant Chow groups and the ordinary Chow groups. 
We will provide more details for this fact in the sequel.

To setup our notation, we will briefly review the definitions and basic 
properties of the equivariant Chow groups and Borel-Moore homology groups. 
We will follow the notation of~\cite{Brion98}, however, we will not abbreviate $H^G_*(X,\Q)$ to $H^G(X)$.

Let $X$ be a complex algebraic $G$-scheme, where $G$ is a connected complex linear algebraic group $G$. 
Notice that we do not assume the reductiveness of $G$, however, implicit in the definition of a $G$-scheme
is the assumption that it has a covering by $G$-invariant quasi-projective open sets.  
Let $n$ be a nonnegative integer. 
To define the $k$-th $G$-equivariant Chow group of $X$, we fix a linear representation 
$V$ of $G$, and we fix a $G$-stable open subset $U$ in $V$ satisfying the following 
two conditions: 
\begin{enumerate}
\item the quotient $U\rightarrow U/G$ exists, and it is a principal $G$-bundle;
\item $k \leq \text{codim}_V (V\setminus U)$.
\end{enumerate}
Then the quotient of the product $X\times U$ by the diagonal action of $G$ exists as 
a scheme, which is denoted by $X\times_G U$. 
If $X$ is equidimensional, then the {\em $G$-equivariant Chow group of degree $k$}
of $X$ is defined by 
\begin{align}\label{A:Def1}
A^k_G (X):= A^k ( X\times_G U).
\end{align}
If $X$ is not necessarily equidimensional, then we put 
\begin{align}\label{A:Def Chow2}
A^G_k (X):= A_{k-\dim (G) + \dim (U)} ( X\times_G U).
\end{align}
Obviously, if $X$ is equidimensional, then $A^k_G (X) = A^G_{\dim (X)-k} (X)$.

The $k$-th $G$-equivariant Borel-Moore homology of $X$ is defined by 
$$
H^G_{k} (X):= H_{k-2\dim (G) + 2\dim (U)} (X\times_G U).
$$
The rational equivariant Borel-Moore homology groups as well as 
the rational equivariant Chow groups are defined, as in the ordinary
case, by tensoring the corresponding groups with $\Q$.

\begin{Remark}\label{R}

We will mention several important results that are of crucial importance for our purposes. 

\begin{enumerate}

%\item Bloch's localization long exact sequence (Lemma~\ref{L:BEG long exact})
%has an extension to the equivariant setup, see~\cite[Proposition 5]{EdidinGraham}.

\item The (degree doubling) cycle map between the ordinary Chow groups and
Borel-Moore groups lifts to the equivariant setup, $cl_X^G: A^G_k(X)_\Q \rightarrow H^G_{2k}(X,\Q)$.

\item Let $G$ be a connected reductive complex algebraic group, 
and let $S$ denote the symmetric algebra over the rationals of 
the character group of a maximal torus $T$ of $G$,
We denote by $W$ the Weyl group of $(G,T)$. In this notation, 
we have the equality $A^*_G(pt)_\Q = S^W$, see~\cite[Theorem 10]{Brion98}.
Also, we see from Lemma~\ref{L:Brion's C12} that 
if $P$ is a parabolic subgroup of $G$ with a Levi subgroup $L\subset P$
such that $T\subset L$, then $A^*(G/P)_\Q = S^{W_L}/(S^W_+)$, where $W_L$ is 
the Weyl group of the pair $(L,T)$.
Since $W_L\subset W$, we see that $A^*(G/P)_\Q$ has the structure of a $A^*_G(pt)_\Q$-module.
In fact, for any $G$-scheme $X$, 
the $G$-equivariant Chow group $A^G_*(X)$ is a graded module over $A^G_*(pt)$.
These statements remain true if we replace the equivariant Chow rings with 
the equivariant cohomology rings.

\item Let $X$ be a $G$-scheme, where $G$ is as in the previous item. 
It is shown in~\cite[Proposition 11]{Brion98} that there is a canonical isomorphism
$\rho_G : A^G_*(X)_\Q / (S^W_+) \rightarrow A_*(X)_\Q$.

\item 
If $Y$ is a homogeneous variety of the form $G/K$ for some closed subgroup $K$ of $G$, 
then the equivariant cycle map $cl^G_Y : A^G_*(Y)_\Q \rightarrow H^G_*(Y,\Q)$ 
is an isomorphism. Indeed, it is easy to see from definitions, by using the homotopy invariance property, that 
$A^G_*(G/K)_\Q=A^K_*(pt)_\Q= A_*(BK)_\Q$ and that  
$H^G_*(G/K,\Q)=H^K_*(pt,\Q)= H_*(BK,\Q)$. Here, $BK$ denotes the classifying space of $K$. 
The ordinary cycle map $cl_{BK} : A_*(BK)_\Q \rightarrow H_*(BK,\Q)$
is an isomorphism. This fact is proven by Edidin and Graham in~\cite{EdidinGraham0}
by building on Totaro's work~\cite{TotaroChow}. Since the cycle map is degree doubling, 
it also follows from this analysis that $Y$ has zero equivariant Borel-Moore homology in 
odd degrees.

\end{enumerate}

\end{Remark}

\section{Proof of Theorem~\ref{I:T1}}\label{S:3}

\begin{proof}[Proof of Theorem~\ref{I:T1}]

We start with a quick reduction argument since we do not assume 
that $G$ is reductive. Let $R_u(G)$ denote the unipotent radical of $G$,
and let $G'$ denote the reductive group $G/R_u(G)$. 
Then, for an open set $U$ as in the definition of the 
$G$-equivariant Chow groups, see (\ref{A:Def1}), we consider the quotient 
$U \to U/G'$. This quotient exists, and furthermore, it is a principal $G'$-bundle. 
Thus we get a smooth map $X\times_{G'} U \to X\times_G U$ with fibers $G/G'$.
But the fiber $G/G'$ is an affine space, therefore, $A^G_*(X) = A^{G'}_*(X)$. 
Also by the same argument, we see that $H^G_*(X,\Q)  = H^{G'}_*(X,\Q)$.
From now on we will assume that $G$ is a reductive group.

Let $Y$ be a closed $G$-orbit, and let $U$ denote its complement in $X$. 
We proceed to analyze the equivariant cycle maps for $U$ and $X$.
We claim that these maps are isomorphisms. 
Notice that the number of $G$-orbits in $U$ is one less than the
number of $G$-orbits in $X$, so, we will use induction on the number of orbits. 
The base case, where there is a single $G$-orbit, is taken care of
by Remark~\ref{R}. 
To finish the inductive argument, we will use the $G$-equivariant localization diagram for $(Y,X,U)$:
\begin{figure}[h]
\begin{center}
\scalebox{.85}{
\begin{tikzpicture}
\node at (-1.25,1) (a) {$A_k^G(X)_\Q$};
\node at (-4.25,1) (a') {$A_k^G(Y)_\Q$};

\node at (2.25,1) (b) {$A_k^G(U)_\Q$};
\node at (5.25,1) (b') {$0$};
\node at (-1.25,-1) (c) {$H_{2k}^G(X,\Q)$};
\node at (-4.25,-1) (c') {$H_{2k}^G(Y,\Q)$};
\node at (-7.25,-1) (e') {$H_{2k+1}^G(U,\Q)$};
\node at (-7.25,1) (e) {$A_{k+1}^G(U,1)_\Q$};
\node at (2.25,-1) (d) {$H_{2k}^G(U,\Q)$}; 
\node at (5.25,-1) (d') {$H_{2k-1}^G(Y,\Q)$};
\node at (-3.75,0) {$cl_Y^G$}; 
\node at (-.75,0) {$cl_X^G$}; 
\node at (2.75,0) {$cl_U^G$}; 
\node at (0.5,1.35) {$i^*$}; 
\node at (0.5,-.65) {$i^*$}; 
\node at (-2.75,1.35) {$j_*$}; 
\node at (-2.75,-.65) {$j_*$}; 
\node at (3.5,-.65) {$\delta_{2k}$}; 
\node at (-5.75,-.65) {$\delta_{2k+1}$}; 
\draw[->,thick] (a) to (b);
\draw[->,thick] (a') to (a);
\draw[->,thick] (a') to (c');
\draw[->,thick] (c') to (c);
\draw[->,thick] (a) to (c);
\draw[->,thick] (b) to (d);
\draw[->,thick] (b) to (b');
\draw[->,thick] (d) to (d');
\draw[->,thick] (c) to (d);
\draw[->,thick] (e') to (c');

\draw[->,thick] (e) to (a');
\end{tikzpicture}
}
\end{center}
\label{F:commutative}
\caption{The $G$-equivariant localization diagram for $(Y,X,U)$.}
\end{figure}

From the commuting of the squares in the diagram, we see 
that the long exact sequence of equivariant (higher) Chow groups
split at the third term, and the long exact sequence of 
equivariant Borel-Moore homology groups splits into short exact sequences.
In particular, we see from these splittings the isomorphisms 
\begin{align}\label{A:second}
A^G_*(X)_\Q \cong H^G_*(X,\Q) \text{ and } \ 
H^G_*(X,\Q) \cong H^G_*(Y,\Q) \oplus H^G_*(U,\Q).
\end{align}
If $H$ is a closed subgroup of $G$, then the $H$-equivariant 
Borel-Moore homology of a point has the structure of a (graded) $H^G_*(pt,\Q)$-module,
therefore, for every $G$-orbit $Y'$ in $X$, the same statement holds true for $H^G(Y',\Q)$.   
Notice that we can re-iterate these arguments for $H^G_*(U,\Q)$ as well. 
Therefore, in light of the second isomorphism in (\ref{A:second}), we see that 
$H^G_*(X,\Q)$ has a filtration, as a graded $H^G_*(pt,\Q)$-module, given by the sums of the equivariant
Borel-Moore homology groups of the $G$-orbits.
Clearly, the associated graded module of this filtration is the direct sum of all $G$-equivariant Borel-Moore homology
groups of the orbits of $X$. This finishes the proof.
\end{proof}

\section{Proof of Theorem~\ref{I:T2}}\label{S:4}

Our assumptions on $X$ and $G$ are as before; 
$X$ is a complex algebraic variety on which a connected complex linear 
algebraic group $G$ acts morphically with finitely many orbits. 
In addition, we assume the existence of a maximal torus $T$ of $G$ such that 
in every $G$-orbit in $X$, there exists a $T$-fixed point.
Therefore, if $Z$ is a $G$-orbit in $X$, then $Z\cong G/K$ 
for some closed subgroup $K$ with $T\subset K$. In particular, 
$\mt{rk} (K) = \mt{rk} (G)$, hence, the odd rational cohomology of $Z$ 
is zero by~\cite[Corollary 12]{Brion98}. Below is a strengthening of this result. 

\begin{Lemma}\label{L}
If $Z$ denotes a $G$-orbit in $X$, then $Z$ has zero cohomology in odd degrees. 
\end{Lemma}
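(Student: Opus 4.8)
The plan is to reduce, by standard fibration and covering arguments, to the case where $Z=G/K$ with $K$ a connected reductive subgroup of maximal rank in a connected reductive group, and then to read off the vanishing from the equivariant data of Remark~\ref{R} together with the freeness of $A^*_K(\mathrm{pt})_\Q$ over $A^*_G(\mathrm{pt})_\Q$. First I would carry out the reductions. As in the opening of the proof of Theorem~\ref{I:T1}, we may replace $G$ by $G/R_u(G)$: concretely, $G/K\to G/(R_u(G)K)$ is a torsor under a unipotent group, hence a Zariski-locally trivial bundle with affine-space fibers, and therefore induces an isomorphism on rational Borel--Moore homology, so we may assume $G$ is connected reductive. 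Since $T\subseteq K\subseteq G$, the torus $T$ is automatically a maximal torus of $K$, so $K^{\circ}$ has maximal rank. The finite \'etale cover $G/K^{\circ}\to G/K$ yields $H^*(G/K,\Q)=H^*(G/K^{\circ},\Q)^{K/K^{\circ}}$; as $K/K^{\circ}$ acts by graded automorphisms, it suffices to treat $K^{\circ}$. Writing $K^{\circ}=R_u(K^{\circ})\rtimes L$ with $L\supseteq T$ a Levi factor, $G/L\to G/K^{\circ}$ is again an affine-space bundle, so $H^*(G/K^{\circ},\Q)\cong H^*(G/L,\Q)$. Hence we may assume $K$ is connected reductive of maximal rank in $G$.

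Next I would invoke the equivariant machinery. By Remark~\ref{R}(4) the equivariant cycle map gives $A^G_*(G/K)_\Q\cong H^G_*(G/K,\Q)$, and $A^G_*(G/K)_\Q\cong A^K_*(\mathrm{pt})_\Q$; by Remark~\ref{R}(2), applied with $K$ in place of $G$, the latter is the invariant ring $S^{W_K}$, where $S$ is the ring of Remark~\ref{R}(2) and $W_K:=N_K(T)/T$ is a subgroup of $W$, so in particular it is concentrated in even degrees. The crucial point is that $S^{W_K}$ is a \emph{free} graded module over $S^W=A^*_G(\mathrm{pt})_\Q$: the operator $\tfrac{1}{|W_K|}\sum_{w\in W_K}w$ is an $S^W$-linear idempotent on $S$ with image $S^{W_K}$, exhibiting $S^{W_K}$ as a direct summand of the free $S^W$-module $S$, hence as a graded projective and therefore graded free module. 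Consequently $G/K$ is equivariantly formal, so its rational Borel--Moore homology is obtained from the equivariant one by $-\otimes_{S^W}\Q$. Combined with the unconditional isomorphism $A_*(G/K)_\Q\cong A^G_*(G/K)_\Q/(S^W_+)$ of Remark~\ref{R}(3) and naturality of the cycle map, this shows that the ordinary cycle map is an isomorphism and that $H_*(G/K,\Q)\cong S^{W_K}/(S^W_+)$. The latter is a quotient of an evenly graded ring by an ideal generated in even degrees, hence vanishes in odd degrees; since $G/K$ is smooth of pure dimension, Poincar\'e duality upgrades this to $H^{\mathrm{odd}}(Z,\Q)=0$.

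I expect the main obstacle to be the equivariant-formality step --- the assertion that for $G/K$ the passage from equivariant to ordinary Borel--Moore homology is \emph{exactly} reduction modulo $(S^W_+)$. On the Chow side this holds with no hypotheses (Remark~\ref{R}(3)), but on the homology side it genuinely uses the freeness of $A^*_K(\mathrm{pt})_\Q$ over $S^W$ established above, and it is false for a general $G$-variety (for example when $G$ acts on itself by translations). The other ingredients --- the unipotent radical and Levi reductions, the \'etale descent for $\pi_0(K)$, and the idempotent argument --- are routine.
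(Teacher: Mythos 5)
Your proof is correct, but it takes a genuinely different route from the paper's. After the common first step (reducing to the identity component via $H^*(G/K,\Q)=H^*(G/K^0,\Q)^{K/K^0}$), the paper argues topologically: it uses the fibration $K^0/T\to G/T\to G/K^0$, observes that $H^*(K^0/T)$ is generated by Chern classes of line bundles attached to characters of $T$ which extend to $G/T$, and applies Leray--Hirsch to get $H^*(G/T)\cong H^*(K^0/T)\otimes H^*(G/K^0)$, whence odd vanishing for $G/K^0$ follows from that of $G/T$. This needs no Levi reduction and no equivariant machinery. You instead push the reductions further (unipotent radicals of both $G$ and $K^0$) to reach a connected reductive $K$ of maximal rank, and then essentially re-prove Brion's Corollary~12 (the paper's Lemma~\ref{L:Brion's C12}, which the paper explicitly cites just before the Lemma as covering exactly this connected reductive case): you identify $H^*_G(G/K,\Q)$ with $S^{W_K}$ via Remark~\ref{R}, establish freeness of $S^{W_K}$ over $S^W$ by the averaging idempotent plus Chevalley's theorem, and descend by equivariant formality to $H^*(G/K,\Q)\cong S^{W_K}/(S^W_+)$. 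Your flagged ``crucial point'' --- that freeness of $H^*_G(Z,\Q)$ over $H^*_G(\mathrm{pt},\Q)$ forces $H^*(Z,\Q)\cong H^*_G(Z,\Q)\otimes_{S^W}\Q$ --- is a standard consequence of the Eilenberg--Moore spectral sequence for $G/K\to BK\to BG$ (or a Poincar\'e-series argument using that $S^W$ is polynomial), and importantly your freeness is obtained from invariant theory rather than from odd vanishing, so there is no circularity with the converse implication used later in the proof of Theorem~\ref{I:T2}. The trade-off: the paper's argument is shorter and more elementary; yours yields strictly more, namely an explicit presentation of $H^*(Z,\Q)$ and the fact that $Z$ is CH-congruent. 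One cosmetic imprecision: $G/K\to G/(R_u(G)K)$ is not literally a torsor under $R_u(G)$ unless $R_u(G)\cap K$ is trivial, but its fibers $R_u(G)/(R_u(G)\cap K)$ are still affine spaces, so the conclusion you draw from it stands.
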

\begin{proof}
By our assumption, 
$Z$ is of the form $G/K$ for some closed subgroup $K$ with $T\subset K$. 
Observe that, to prove our claim, it suffices to prove it for $G/K^0$, where 
$K^0$ is the connected component of $K$ containing the identity. 
Indeed, $G/K$ is the quotient of $G/K^0$ by the finite group $K/K^0$. 

Now, since $T$ is connected, $T\subset K^0$, hence, we have a fibration 
$K^0/T \to G/T \to G/K^0$.
It is not difficult to see that the cohomology ring of $K^0/T$ is equal to the 
cohomology ring of the full flag variety of $K^0$, which is 
well-known to be generated by the Chern classes of line bundles associated 
with characters of $T$. In particular we see that these line bundles 
on $K^0/T$ extend to $G/T$, hence we can apply the Leray-Hirsch 
theorem to conclude that 
$$
H^*(G/T) \cong H^*(K^0/T) \otimes H^*(G/K^0)
$$
as graded $\Z$-modules. 
Since the cohomology of $G/T$ has zero cohomology 
in odd degrees, the same statement holds true for 
the cohomologies of $K^0/T$ and $G/K^0$. This finishes the proof.

\end{proof}

\begin{proof}[Proof of Theorem~\ref{I:T2}]

We continue with the notation of Lemma~\ref{L}, that is, 
$Z=G/K$ with $T\subset K$. Since all odd cohomology groups of $Z$ are zero,
the Leray spectral sequence of the universal bundle 
$EG \times_G Z \to BG$
degenerates at the second page, hence, the equivariant cohomology of $Z$ 
is a free module over $H^*(BG,\Q) = H^*_G(pt,\Q)$. 
As this property holds true for every $G$-orbit in $X$, we conclude that 
$H^G_*(X,\Q)$ is a free module over $H^*_G(pt,\Q)$. 
Thus, by tensoring the filtration (from Theorem~\ref{I:T1}) of the equivariant Borel-Moore homology of $X$ with 
$\Q = H^*_G(pt,\Q)/ (S^W_+)$, we finish the proof of our theorem.

\end{proof}

\section{Applications}\label{S:5}

\subsection{Proof of Theorem~\ref{I:T3}}\label{SS:5}

Let $X$ be an irreducible $G$-scheme, and let $X_0$ denote the open $G$-orbit in $X$. 
We will call $X$ {\em CH-congruent} (Chow group/rational homology congruent) if 
the cycle map to the rational cohomology induced via the Poincar\'e duality, that is 
$cl_{X_0} : A^* (X_0)_\Q \rightarrow H^*(X_0,\Q)$, 
is an isomorphism. 
More generally, we will call a $G$-scheme $X$ {\em CH-congruent} 
if each irreducible component of $X$ is CH-congruent.

Let $H$ be a connected reductive subgroup of a connected reductive group $G$.
Let $T_H$ be a maximal torus of $H$ and let $T$ be a maximal torus of $G$ containing $T_H$.
We will denote by $W_H$ and $W$ the Weyl groups of $(H,T_H)$ and $(G,T)$, respectively. 
We denote by $S_H$ and $S$ the symmetric $\Q$-algebras over the character groups of $T_H$ and $T$ 
respectively. Finally, we will denote by $(S_+^W)$ 
the ideal generated by the restriction to $S^{W_H}_H$ of the homogeneous positive degree $W$-invariants in $S$.

\begin{Lemma}\label{L:Brion's C12}
Let $H$ be a closed, connected, and reductive subgroup of a connected reductive group $G$. 
Then the following are equivalent:
\begin{enumerate}
\item The rational cohomology of $G/H$ vanishes in odd degree.
\item The ranks of $G$ and $H$ are the same.
\item The homogeneous space $G/H$ is CH-congruent.
\end{enumerate}
Moreover, 
$
A^*(G/H)_\Q \cong S_H^{W_H}/ (S_+^W) \ \text{  and }\   
A^*_T(G/H)_\Q \cong S \otimes_{S^W} (S_+^W).
$
\end{Lemma}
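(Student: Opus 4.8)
\textbf{Plan for the proof of Lemma~\ref{L:Brion's C12}.}
The strategy is to establish the cycle of implications $(2)\Rightarrow(1)\Rightarrow(3)\Rightarrow(2)$, and then to extract the two displayed formulas along the way, since they are essentially the engine behind $(1)\Rightarrow(3)$. For $(2)\Rightarrow(1)$, I would argue exactly as in Lemma~\ref{L}: if $\mt{rk}(H)=\mt{rk}(G)$, then $T$ is a maximal torus of $H$ (after conjugating so that $T_H = T$), and the fibration $H/T \to G/T \to G/H$ together with the Leray--Hirsch theorem — valid because the generators of $H^*(H/T)$ are Chern classes of line bundles associated to characters of $T$, which manifestly extend to $G/T$ — gives $H^*(G/T)\cong H^*(H/T)\otimes H^*(G/H)$ as graded groups; since $H^*(G/T)$ is concentrated in even degrees, so is $H^*(G/H)$. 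The implication $(1)\Rightarrow(2)$ (the converse, needed to close the loop if one prefers $(1)\Leftrightarrow(2)$) is classical: a compact homogeneous space $G/H$ has nonvanishing odd rational cohomology unless the ranks agree, by the structure of the Poincaré polynomial of $G/H$ in terms of the degrees of the Weyl groups; I would just cite this.

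The heart is $(1)\Rightarrow(3)$, and this is where I expect the main work. The plan is to compute $A^*_T(G/H)_\Q$ first. Using $A^*_T(G/H)_\Q = A^*_H(pt)_\Q$ (homotopy invariance, as in Remark~\ref{R}(4)) and the identification $A^*_H(pt)_\Q = S_H^{W_H}$ from~\cite[Theorem 10]{Brion98}, one has $A^*_T(G/H)_\Q \cong S_H^{W_H}$. To see this as $S\otimes_{S^W}(S_+^W)$ — note $(S_+^W)$ here denotes the $S^{W_H}_H$-submodule generated by restrictions of positive-degree $W$-invariants, which is the notation fixed just before the lemma — I would invoke the freeness of $S$ (hence of $S_H$ by restriction, under hypothesis $(2)$) over $S^W$, i.e. the Chevalley theorem, and compare graded dimensions; the assumption that $\mt{rk}(H)=\mt{rk}(G)$ is what makes $S_H$ a finite free $S^W$-module of the expected rank. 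Then, applying $\rho_G$ from Remark~\ref{R}(3), i.e. reducing modulo $(S^W_+)$, gives $A^*(G/H)_\Q \cong S_H^{W_H}/(S_+^W)$. Finally, CH-congruence: since $A^*(G/H)_\Q$ is, by this computation, a free module over $A^*_G(pt)_\Q$ modulo the augmentation ideal and the odd cohomology vanishes by $(1)$, the equivariant cycle map $cl^G_{G/H}$ is an isomorphism (Remark~\ref{R}(4) gives it on orbits; degeneration of the Leray spectral sequence of $EG\times_G (G/H)\to BG$ as in the proof of Theorem~\ref{I:T2} propagates it), and reducing mod $(S^W_+)$ via $\rho_G$ and its Borel--Moore analogue yields that $cl_{G/H}$ itself is an isomorphism; degree-doubling and Poincaré duality then give that $cl_{X_0}\colon A^*(G/H)_\Q \to H^*(G/H,\Q)$ is an isomorphism, which is precisely CH-congruence.

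For $(3)\Rightarrow(2)$, I would run the contrapositive: if the ranks differ, then by $(1)\Leftrightarrow(2)$ (already established) $H^*(G/H,\Q)$ has a nonzero odd-degree piece, so it cannot be isomorphic to $A^*(G/H)_\Q$, which is a quotient of $S_H^{W_H}$ and hence concentrated in even degrees once we account for the degree-doubling convention — so $G/H$ is not CH-congruent. The main obstacle I anticipate is bookkeeping around the two grading conventions (the doubling in the cycle map) and making the Chevalley-freeness comparison of graded dimensions for $S_H$ over $S^W$ fully rigorous when $T_H$ is a proper subtorus arrangement inside $T$ — one must be careful that "$\mt{rk}(H)=\mt{rk}(G)$" is used in exactly the right place, namely to ensure $S_H$ is finite over $S^W$ so that the quotient by $(S_+^W)$ has the right (finite, even-concentrated) Hilbert series matching $H^*(G/H,\Q)$.
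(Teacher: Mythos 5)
First, note that the paper does not actually prove this lemma: its ``proof'' is the single line ``See~\cite[Corollary 12]{Brion98}.'' So you are necessarily taking a different route by reconstructing the argument. Your skeleton is essentially the right one and does close the logical loop: $(2)\Rightarrow(1)$ by Leray--Hirsch applied to $H/T\to G/T\to G/H$ (literally the argument of Lemma~\ref{L}); $(1)\Rightarrow(2)$ by the Hopf--Samelson fact that the Euler characteristic of $G_u/H_u$ is nonzero exactly when the ranks agree (and is positive when the odd cohomology vanishes); $(1)\Rightarrow(3)$ by combining the equivariant cycle isomorphism for homogeneous spaces (Remark~\ref{R}(4)) with reduction modulo $(S^W_+)$ on both the Chow side (Remark~\ref{R}(3), which holds unconditionally) and the Borel--Moore side (which requires the degeneration of the Leray spectral sequence of $EG\times_G(G/H)\to BG$, i.e.\ exactly what hypothesis $(1)$ provides); and $(3)\Rightarrow(2)$ by degree parity of the cycle map.

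The genuine gap is in the computation of the two displayed rings, which you rightly identify as the engine of $(1)\Rightarrow(3)$. You assert $A^*_T(G/H)_\Q=A^*_H(pt)_\Q$ ``by homotopy invariance, as in Remark~\ref{R}(4)''; but that remark gives $A^*_G(G/H)_\Q=A^*_H(pt)_\Q=S_H^{W_H}$ --- the group doing the quotienting must be the group acting transitively, and $ET\times_T(G/H)$ is not an approximation to $BH$. (Concretely, for $G=\mathbf{SL}_2$ and $H=T$ one has $A^*_T(G/T)_\Q\cong S\otimes_{S^W}S$, of rank $4$ over $S^W$, whereas $A^*_T(pt)_\Q=S$ has rank $2$.) The $T$-equivariant ring is obtained from the $G$-equivariant one by the base change $A^*_T(X)_\Q\cong S\otimes_{S^W}A^*_G(X)_\Q$, so the correct identity is $A^*_T(G/H)_\Q\cong S\otimes_{S^W}S_H^{W_H}$; the expression $S\otimes_{S^W}(S^W_+)$ printed in the lemma is itself a misprint for this, and your plan to reconcile $S_H^{W_H}$ with $S\otimes_{S^W}(S^W_+)$ by ``comparing graded dimensions'' cannot succeed, since the target as written is an ideal tensored up rather than the module you actually need. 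Once you replace $A^*_T$ by $A^*_G$ in that step, the passage to $A^*(G/H)_\Q\cong S_H^{W_H}/(S^W_+)$ via $\rho_G$ is correct (and needs no rank hypothesis), and the remainder of your argument for $(1)\Rightarrow(3)$ and $(3)\Rightarrow(2)$ goes through.
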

\begin{proof}
See~\cite[Corollary 12]{Brion98}.
\end{proof}

As before, for every closed imbedding $j: Y\hookrightarrow X$ 
and the complementary imbedding $i: U\hookrightarrow X$, where $U:=X\setminus Y$, 
we have a localization diagram as in Figure~\ref{F:commutative2}: 

\begin{figure}[h]
\begin{center}
\scalebox{.85}{
\begin{tikzpicture}
\node at (-1.25,1) (a) {$A_k(X)_\Q$};
\node at (-4.25,1) (a') {$A_k(Y)_\Q$};

\node at (2.25,1) (b) {$A_k(U)_\Q$};
\node at (5.25,1) (b') {$0$};
\node at (-1.25,-1) (c) {$H_{2k}(X,\Q)$};
\node at (-4.25,-1) (c') {$H_{2k}(Y,\Q)$};
\node at (-7.25,-1) (e') {$H_{2k+1}(U,\Q)$};
\node at (-7.25,1) (e) {$A_{k+1}(U,1)_\Q$};
\node at (2.25,-1) (d) {$H_{2k}(U,\Q)$}; 
\node at (5.25,-1) (d') {$H_{2k-1}(Y,\Q)$};
\node at (-3.75,0) {$cl_Y$}; 
\node at (-.75,0) {$cl_X$}; 
\node at (2.75,0) {$cl_U$}; 
\node at (0.5,1.35) {$i^*$}; 
\node at (0.5,-.65) {$i^*$}; 
\node at (-2.75,1.35) {$j_*$}; 
\node at (-2.75,-.65) {$j_*$}; 
\node at (3.5,-.65) {$\delta_{2k}$}; 
\node at (-5.75,-.65) {$\delta_{2k+1}$}; 
\draw[->,thick] (a) to (b);
\draw[->,thick] (a') to (a);
\draw[->,thick] (a') to (c');
\draw[->,thick] (c') to (c);
\draw[->,thick] (a) to (c);
\draw[->,thick] (b) to (d);
\draw[->,thick] (b) to (b');
\draw[->,thick] (d) to (d');
\draw[->,thick] (c) to (d);
\draw[->,thick] (e') to (c');

\draw[->,thick] (e) to (a');
\end{tikzpicture}
}
\end{center}
\label{F:commutative2}
\caption{The nonequivariant localization diagram for $(Y,X,U)$.}
\end{figure}
%\noindent Loosely speaking, the nonequivariant version of the diagram in (\ref{F:commutative}) is obtained by
%omitting the superscript $G$ from the notation. More precisely, it is obtained by considering the trivial action of $G$ 
%on $X$. 

We will make use of the above (nonequivariant) diagram in the context of equivariant embeddings 
of CH-congruent spaces. 
%Next, we discuss equivariant embeddings of CH-congruent spaces. 
Let $X$ be a smooth complete spherical embedding of a 
CH-congruent homogeneous space $G/H$. 
Under these assumptions, we know that $X$ has a cellular decomposition via 
Bia{\l}ynicki-Birula decomposition. 
Let $Y$ denote the complement of $U:=G/H$ in $X$,
let $i: U\hookrightarrow X$ and $j : Y \rightarrow X$ denote the corresponding inclusions.
%We now consider the nonequivariant version of the $G$-equivariant localization diagram for $(Y,X,U)$. 
Since $U$ is a CH-congruent homogeneous space, 
the long exact sequences in Figure~\ref{F:commutative2} break down into short 
exact sequences. 
%To see this, we consider the nonequivariant form of the diagram in Figure~\ref{F:commutative},
%where the horizontal sequences are exact,
Indeed, the vertical arrows, $cl_U$ and $cl_X$ are isomorphisms.
On both of the top and the bottom sequences, 
the maps $i^*$'s are induced from the restriction map 
to the open set $U$, and therefore, they are actually equivalent maps.
(So, in this case, our notation is unambiguous.) It follows that, 
by the exactness of the Chow group sequence, $i^*$ is 
surjective, hence $\delta_{2k}$ is the 0-map. 
Note that $U$ is a CH-congruent variety, so, its odd (co)homology 
vanishes, therefore, $\delta_{2k+1}$ is the 0 map as well.
It follows that $H_{2k+1} (Y,\Q) \cong H_{2k+1}(X,\Q)$.

By the exactness of the Chow sequence,
and by identifying $i^*$'s in the 
top and the bottom sequences, we see that 
the images of $j_*$'s are isomorphic subgroups.
Furthermore, we know that the bottom $j_*$ is injective since $\delta_{2k+1}=0$.
However, we do not claim that the $j_*$'s are 
the `same' (yet) since we do not know if $cl_Y$ is an 
isomorphism, or not. 
It turns out that this is the case by a deep result due to Totaro and Jannsen. 

According to~\cite{Totaro14}, a {\em linear scheme} is a scheme which can be obtained 
by an inductive procedure starting with an affine space of any dimension, in such a way that 
the complement of a linear scheme imbedded in affine space is also a linear scheme,
and a scheme which can be stratified as a finite disjoint union of linear schemes is a linear scheme. 

\begin{Lemma}\label{L:TotaroJannsen}
For any complex linear scheme $Z$, the natural map
$$
A_i(Z)_\Q \longrightarrow W_{-2i} H_{2i} (Z,\Q) 
$$ 
from the rational Chow group into the smallest subspace of 
rational Borel-Moore homology with respect to the weight filtration 
is an isomorphism.
\end{Lemma}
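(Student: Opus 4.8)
The plan is to recall the result of Totaro and Jannsen rather than to prove it from scratch, but since the excerpt asks for a proof sketch I will indicate how the argument goes and where the real content lies. First I would observe that both sides of the map are compatible with the defining inductive procedure for linear schemes: the rational Chow group and the weight-graded piece $W_{-2i}H_{2i}(-,\Q)$ both fit into long exact localization sequences for a closed-open decomposition, and for affine space $\A^n$ both sides agree (the Chow group is $\Q$ in degree $n$ and $0$ otherwise, and the same holds for the lowest weight piece of Borel-Moore homology, since $H_{2i}(\A^n,\Q)$ is pure of weight $-2i$ only when $i=n$). So the strategy is a dévissage: stratify $Z$ as a finite disjoint union of locally closed pieces each built inductively from affine spaces, and propagate the isomorphism along the localization sequences.

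The key steps, in order, would be: (1) set up the localization long exact sequence in rational Chow groups for a closed subscheme $Z'\subset Z$ with open complement $Z''$, and the corresponding long exact sequence in Borel-Moore homology, together with the cycle map between them; (2) show that passing to the lowest-weight graded pieces $W_{-2i}H_{2i}$ turns the Borel-Moore long exact sequence into something that still receives the Chow long exact sequence — this is where one uses strictness of morphisms of mixed Hodge structures and the fact that for linear schemes the odd-degree Borel-Moore homology and the "wrong weight" parts vanish, so that the weight filtration interacts well with the boundary maps; (3) verify the base case for affine space by direct computation; (4) run the induction on the number of strata (or on the inductive complexity of the linear scheme), using the five lemma on the ladder of exact sequences to deduce that $cl_{Z}$ restricted to lowest weight is an isomorphism once it is known for the closed and open pieces.

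The main obstacle — and the reason this is quoted as a "deep result due to Totaro and Jannsen" rather than proved here — is step (2): controlling the weight filtration on Borel-Moore homology under the localization sequence. One needs that for linear schemes the Borel-Moore homology $H_{2i}(Z,\Q)$ has weights only in the range $[-2i, -i]$ (or some such narrow range), with the lowest weight piece behaving like an "algebraic" part, and that the connecting homomorphisms in the localization sequence are strictly compatible with weights. Establishing these weight bounds is itself an induction that runs parallel to the main one, and it is the technical heart of Totaro's paper \cite{Totaro14} (building on Jannsen's work). Since the present paper only needs to \emph{apply} this statement, I would simply cite it and move on, noting that in our situation the relevant schemes — the $G$-orbits $G/H$ with $\mathrm{rk}\, H = \mathrm{rk}\, G$ and their closures in the wonderful/double flag varieties — are linear schemes because they admit cellular (Białynicki-Birula) decompositions into affine spaces, so the lemma applies and forces $cl_Y$ to be an isomorphism, which is exactly what is needed to complete the proof of Theorem~\ref{I:T3}.
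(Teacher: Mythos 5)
Your proposal matches the paper exactly: the paper's entire ``proof'' of this lemma is the citation to \cite[Theorem 3]{Totaro14}, and you likewise conclude by citing that result rather than reproving it. Your accompanying sketch of the d\'evissage (localization sequences for Chow groups and Borel--Moore homology, the base case of affine space, and the weight-filtration control as the technical heart) is a fair account of where the real content of Totaro's argument lies, so there is nothing to correct.
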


\begin{proof}
See~\cite[Theorem 3]{Totaro14}.
\end{proof}

\begin{Remark}\label{R:general remark}
\begin{enumerate}
\item A finite union of linear schemes is a linear scheme. 
\item Any spherical variety is linear, see the addendum at the end of Section 2 in~\cite{Totaro14}.
\item The complement of the open $G$-orbit in a spherical variety is a linear scheme since 
it is a union of finitely many $G$-orbit closures.
\end{enumerate}
\end{Remark}

Now, by Remark~\ref{R:general remark} and Lemma~\ref{L:TotaroJannsen}, 
we know that the vertical map $cl_Y$ is injective. Since the bottom 
$j_*$ is injective, and it has the same image as the top 
$j_*$, we see that the top $j_*$ has to be injective as well. 
It follows that the localization long exact sequence for 
the higher Chow groups breaks down. These observations 
give us the following result.

\begin{Theorem}\label{T:break it}
Let $X$ be a smooth complete spherical $G$-variety. 
If the open $G$-orbit, $G/H$ is CH-congruent, then 
we have 
$$
H_{2k}(X-G/H,\Q) \cong A_k(X-G/H)_\Q \ \quad \text{for all } k\geq 0,
$$
and 
$$
A_k(X)_\Q \cong A_k(X-G/H)_\Q \oplus A_k(G/H)_\Q \ \quad \text{for all } k\geq 0.
$$
Furthermore, we have 
$H_{2k+1}(X,\Q) = H_{2k+1}(X-G/H,\Q) = \{ 0 \}\ \quad \text{for all } k\geq 0$.
\end{Theorem}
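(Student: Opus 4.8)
The plan is to run the localization diagram in Figure~\ref{F:commutative2} for the pair $(Y,X,U)$ with $U = G/H$ and $Y = X \setminus (G/H)$, and squeeze the needed isomorphisms out of the commuting squares together with the three inputs already assembled in the excerpt: (i) $cl_U$ is an isomorphism since $G/H$ is CH-congruent; (ii) $cl_X$ is an isomorphism; and (iii) $cl_Y$ is injective by Lemma~\ref{L:TotaroJannsen} applied to the linear scheme $Y$ (Remark~\ref{R:general remark}(3)). First I would record that, because $G/H$ is CH-congruent, its odd Borel-Moore homology vanishes and $i^*$ is surjective on Chow groups (the map $A_k(X)_\Q \to A_k(U)_\Q$ is surjective because $A_{k-1}(Y)_\Q \to A_{k-1}(X)_\Q$ lands where the kernel of restriction sits, and the sequence is exact with a $0$ on the right in the top row of the diagram). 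Hence $\delta_{2k} = 0$ and, since $H_{2k+1}(U,\Q) = 0$, also $\delta_{2k+1} = 0$.

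From $\delta_{2k+1} = 0$ the bottom row gives a short exact sequence $0 \to H_{2k}(Y,\Q) \xrightarrow{j_*} H_{2k}(X,\Q) \xrightarrow{i^*} H_{2k}(U,\Q) \to 0$, and from $\delta_{2k+1}=0$ together with $H_{2k+1}(U,\Q) = 0$ we get $H_{2k+1}(Y,\Q) \cong H_{2k+1}(X,\Q)$. Next I would chase the left-hand square: since $cl_X$ and $cl_U$ are isomorphisms and the top Chow row is also short exact (it has a $0$ on the right and $i^*$ is surjective), the map $j_* : A_k(Y)_\Q \to A_k(X)_\Q$ has image isomorphic to the image of the bottom $j_*$, which is all of $\ker i^* \subset H_{2k}(X,\Q)$, i.e. $\cong H_{2k}(Y,\Q) \cong A_k(Y)_\Q \oplus 0$-ish. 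The cleanest way to close the loop: $cl_Y$ fits into the commuting square $A_k(Y)_\Q \xrightarrow{j_*} A_k(X)_\Q$, $H_{2k}(Y,\Q) \xrightarrow{j_*} H_{2k}(X,\Q)$ with the right vertical $cl_X$ an isomorphism; the top $j_*$ is injective (its image is isomorphic to its image in $H_{2k}(X,\Q)$ via $cl_X$, and that image is the image of the injective bottom $j_*$, hence $cl_Y$ followed by an injection agrees with an injection followed by $cl_X$, forcing $cl_Y$ injective — already known — and forcing the top $j_*$ injective). Since $cl_Y$ is injective and degree-doubling, the higher-Chow part of the localization long exact sequence splits: the connecting maps $A_{k+1}(U,1)_\Q \to A_k(Y)_\Q$ must be zero because the composite into $A_k(X)_\Q$ is injective. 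Therefore $A_k(X)_\Q \cong A_k(Y)_\Q \oplus A_k(U)_\Q = A_k(X-G/H)_\Q \oplus A_k(G/H)_\Q$.

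Finally, I would pin down $H_{2k}(Y,\Q) \cong A_k(Y)_\Q$: from the short exact sequences above, $H_{2k}(X,\Q)/\mathrm{im}(j_*) \cong H_{2k}(U,\Q) \cong A_k(U)_\Q$ and $A_k(X)_\Q / \mathrm{im}(j_*) \cong A_k(U)_\Q$, and $cl_X$ matches these quotients; since $cl_X$ is an isomorphism and the $U$-pieces match, the complementary summands must match too, giving $cl_Y : A_k(Y)_\Q \xrightarrow{\sim} H_{2k}(Y,\Q)$. Combined with $H_{2k+1}(Y,\Q) \cong H_{2k+1}(X,\Q)$ and the vanishing $H_{2k+1}(X,\Q)$: the odd-degree vanishing for $X$ itself I would get from the fact that $X$ is smooth complete with a Bia{\l}ynicki-Birula cellular decomposition (noted in the excerpt), so its Borel-Moore homology is concentrated in even degrees and is free; then $H_{2k+1}(Y,\Q) \cong H_{2k+1}(X,\Q) = 0$ as well. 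This gives all three displayed assertions.

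The main obstacle is the bookkeeping around the top (higher Chow) row: one must verify that surjectivity of $i^*$ on $A_k$ really does kill $\delta_{2k}$ and that injectivity of $cl_Y$ (only injectivity, not bijectivity) is genuinely enough to force the higher-Chow connecting map to vanish and hence the Chow sequence to split — this is exactly the subtle point the authors flag when they write ``we do not claim that the $j_*$'s are the `same' (yet)''. The resolution is the commuting-square argument: $cl_X \circ (\text{top } j_*) = (\text{bottom } j_*) \circ cl_Y$, the bottom $j_*$ is injective, $cl_Y$ is injective, hence the left side is injective, hence (as $cl_X$ is injective) the top $j_*$ is injective, which forces the connecting map before it to be zero. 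Everything else is formal diagram-chasing once the three structural inputs (CH-congruence of $U$, bijectivity of $cl_X$, injectivity of $cl_Y$) are in hand.
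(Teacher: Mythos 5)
Your proposal is correct and follows essentially the same route as the paper: the nonequivariant localization diagram with $cl_U$ and $cl_X$ isomorphisms, injectivity of $cl_Y$ via the Totaro--Jannsen result on linear schemes, the commuting-square argument forcing the top $j_*$ to be injective and hence the higher-Chow sequence to split, and the Bia{\l}ynicki-Birula decomposition for the odd-degree vanishing. The only difference is cosmetic: you make explicit the final five-lemma step identifying $A_k(Y)_\Q$ with $H_{2k}(Y,\Q)$, which the paper leaves implicit in the phrase ``by the above discussion.''
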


\begin{proof}
Let $U$ denote the open $G$-orbit, $U=G/H$, and let $Y$ 
denote its complement, $X-U$. By the above discussion, 
it suffices to show the last assertion that, for all $k \in \Z_{\geq 0}$, the groups 
$H_{2k+1} (Y,\Q) \cong H_{2k+1}(X,\Q)$ are trivial.
But this follows from the fact that 
the Borel-Moore homology is equal to the singular homology for an oriented compact manifold,
and the fact that $X$ has a cellular decomposition, 
where the cells are complex affine spaces.
Indeed, $X$ has a Bia{\l}ynicki-Birula decomposition since it is a smooth complete 
spherical variety, hence the odd homology of $X$ vanishes.
\end{proof}

Let $H$ be a closed subgroup of $G$
such that the subset $BH$ is open in $G$. In this case, $H$ is called 
a {\em spherical subgroup} and the homogeneous space $G/H$ is called a 
{\em spherical homogeneous space}.

Let $G/H$ be a spherical homogeneous space. 
A spherical embedding $X$ of $G/H$ is called {\em toroidal} if $X$ has
no $B$-invariant divisor which is $G$-invariant. 
$X$ is called {\em simple} if it has a unique closed $G$-orbit. 
Wonderful varieties are toroidal as shown by Bien and Brion in~\cite{BienBrion}.
In fact, any smooth complete toroidal spherical variety is a wonderful variety. 
The stabilizers of generic points in $G$-orbits of a simple toroidal embedding 
are recently looked at by Batyrev and Moreau in~\cite{BatyrevMoreau}.

\begin{Lemma}\label{L:BM}
Let $X$ be a simple toroidal spherical embedding of $G/H$ 
corresponding to an uncolored cone $\sigma$. 
Denote by $I(\sigma)$ the set of all spherical roots in $\mathcal{S}$ 
that vanish on $\sigma$.
Let $x'$ be a point in the unique closed $G$-orbit $X'$ of $X$, and 
$\mt{Iso}_G(x')$ the isotropy subgroup of $x'$ in $G$. Then, up to a 
conjugation, we have the inclusions:
$$
H_{I(\sigma)} \subset \mt{Iso}_G(x')\subset N_G(H_{I(\sigma)}).
$$
Moreover, there is a homomorphism from $N_G(H_{I(\sigma)})$ to $\mt{Hom}(\sigma^\perp \cap M, \C^*)$
whose kernel is $\mt{Iso}_G(x')$.
\end{Lemma}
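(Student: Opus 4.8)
The plan is to prove Lemma~\ref{L:BM} by reducing the statement to the well-understood structure theory of simple toroidal embeddings, as developed in the Luna--Vust theory and refined in the work of Knop. First I would recall the combinatorial setup: a spherical homogeneous space $G/H$ has a lattice $M$ of $B$-semiinvariant rational functions, a dual lattice $N$, a set $\mc{S}$ of spherical roots sitting in the dual of $M$ (or rather determining the valuation cone), and the colored fans parametrizing spherical embeddings. Since $X$ is \emph{simple} and \emph{toroidal}, it corresponds to a single colored cone $(\sigma, \emptyset)$ with $\sigma$ an uncolored cone contained in the valuation cone $\mc{V}$, and the closed orbit $X'$ corresponds to the interior of $\sigma$. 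The quantity $\sigma^\perp \cap M$ is precisely the character lattice of the torus quotient that measures how far $X'$ is from being the "minimal" closed orbit attached to the face structure.

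Next I would identify $H_{I(\sigma)}$. For a subset $I$ of spherical roots, $H_I$ denotes the generic isotropy group of the simple toroidal embedding whose cone is the face of $\mc{V}$ cut out by the vanishing of the roots in $I$; equivalently, $G/H_I$ is the spherical homogeneous space obtained by the natural "degeneration" (the open orbit in the model embedding with that cone). With $I(\sigma)$ the set of spherical roots vanishing on $\sigma$, the cone $\sigma$ lies in the relative interior of the face $\mc{V} \cap I(\sigma)^\perp$ up to the directions along $\sigma^\perp$; so the closed orbit $X'$ of $X$ is, as a $G$-variety, intermediate between $G/H_{I(\sigma)}$ (the "largest" degeneration with those roots vanishing) and its quotient by the connected torus $\mt{Hom}(\sigma^\perp \cap M, \C^*)$, which is where $N_G(H_{I(\sigma)})$ enters: by Knop's description, $N_G(H_{I(\sigma)})/H_{I(\sigma)}$ is a diagonalizable group whose character lattice is exactly $\sigma^\perp \cap M$ (the part of $M$ orthogonal to the full face, detecting the torus of automorphisms of $G/H_{I(\sigma)}$ as a $G$-variety). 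I would then argue that the orbit $X'$ is $G$-equivariantly isomorphic to $G/H'$ where $H_{I(\sigma)} \subset H' \subset N_G(H_{I(\sigma)})$, the precise $H'$ being the preimage in $N_G(H_{I(\sigma)})$ of the subtorus annihilating $\sigma$. This gives simultaneously the claimed chain of inclusions and the homomorphism $N_G(H_{I(\sigma)}) \to \mt{Hom}(\sigma^\perp \cap M, \C^*)$ with kernel $\mt{Iso}_G(x') = H'$.

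For the actual mechanics I would proceed in three steps. Step one: use the local structure theorem for toroidal embeddings --- near the closed orbit, $X$ looks like $R_u(P) \times S$ where $P$ is a parabolic and $S$ is a toric variety for a quotient torus of $T$ with fan given by $\sigma$ --- to read off the isotropy group $\mt{Iso}_G(x')$ of a point $x'$ in the unique fixed point stratum of $S$. Step two: compare this with the computation of the generic isotropy $H_{I(\sigma)}$ of the minimal toroidal embedding attached to the face $I(\sigma)^\perp \cap \mc{V}$; the passage from a generic orbit to the closed orbit is governed by the same toric local model, so one sees directly that $\mt{Iso}_G(x')$ contains $H_{I(\sigma)}$. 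Step three: the normalizer bound $\mt{Iso}_G(x') \subset N_G(H_{I(\sigma)})$ and the character-group identification follow from Knop's computation of $N_G(H_{I(\sigma)})/H_{I(\sigma)}$ as a diagonalizable group, together with the fact that the orbit map $\sigma \mapsto \sigma^\perp \cap M$ is exactly the pairing used to define the intermediate subgroups.

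I expect the main obstacle to be making the identification $N_G(H_{I(\sigma)})/H_{I(\sigma)} \cong \mt{Hom}(\sigma^\perp \cap M, \C^*)$ canonical and compatible with the given homomorphism, rather than just up to the stated conjugacy. There are two subtleties: first, $H_{I(\sigma)}$ is only defined up to conjugacy (a choice of base point in each stratum), which accounts for the "up to conjugation" hedge in the statement and must be tracked carefully so that $\mt{Iso}_G(x')$ and $N_G(H_{I(\sigma)})$ are compared with respect to the \emph{same} representative; second, one must verify that the lattice $\sigma^\perp \cap M$ --- and not some finite-index sublattice --- is the precise character group, which requires knowing that $H_{I(\sigma)}$ is the exact (not merely up-to-finite-index) generic stabilizer, i.e. that the embedding model is the one with $H_{I(\sigma)}$ connected-modulo-the-correct-component-group. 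I would resolve this by citing the relevant structure results of Knop and of Batyrev--Moreau (referenced just above the lemma), where these normalizer and torus-quotient computations for simple toroidal embeddings are carried out in exactly this generality, and then simply assembling them into the displayed chain of inclusions and the kernel description.
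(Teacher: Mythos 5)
The paper does not actually prove this lemma: it is quoted verbatim from Theorem 1.2 of Batyrev and Moreau, and the ``proof'' in the paper is a one-line citation to that result. Your sketch is a reasonable reconstruction of how that theorem is established (local structure theorem near the closed orbit, the satellites $H_{I}$ as generic isotropy groups attached to faces of the valuation cone, and Knop's identification of $N_G(H_{I(\sigma)})/H_{I(\sigma)}$ as a diagonalizable group with character lattice cut out by $\sigma^\perp\cap M$), and since you ultimately defer to the same sources for the technical inputs, your route is essentially the same as the paper's.
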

\begin{proof}
See~\cite[Theorem 1.2]{BatyrevMoreau}.
\end{proof}

The subgroups $H_{I(\sigma)}$ in Lemma~\ref{L:BM} are called the {\em satellites of $H$}. 
For a closed subgroup $K$ of $G$, let $K^{red}$ denote the maximal reductive subgroup of $K$. 

\begin{Remark}\label{R:toroidal}
In~\cite[Lemma 7.2]{BatyrevMoreau}, Batyrev and Moreau proved the following result,
which they attribute to Brion and Peyre~\cite{BrionPeyre}:
If $I$ is a subset of the spherical roots $\mathcal{S}$ of $G/H$, then $H_I^{red} \subset H$. 
\end{Remark}

\begin{Proposition}\label{P:break it}
Let $H$ be a spherical subgroup of $G$ of maximal rank, that is $\mt{rk} (H)=\mt{rk} (G)$. 
We assume that $G/H$ has a smooth complete toroidal embedding, denoted by $X$. 
Let $X_i$ be a $G$-orbit closure in $X$.
If $H_i$ is the stabilizer of a generic point in $X_i$, that is $H_i = \mt{Stab}_G(x)$
for some $x$ in the open $G$-orbit in $X_i$, then the rank of $H_i$ is equal to rank of $G$. 
Furthermore, we have the isomorphisms 
$H_k(X_i-G/H_i,\Q) \cong A_k(X_i-G/H_i)_\Q$ and 
$A_k(X_i)_\Q \cong A_k(X_i-G/H_i)_\Q \oplus A_k(G/H_i)_\Q$ for all $k\geq 0$.
\end{Proposition}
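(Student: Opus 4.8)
The plan is to reduce the statement to Theorem~\ref{T:break it} applied to each orbit closure $X_i$ separately, the point being that $X_i$ is itself a smooth complete toroidal spherical $G$-variety (of smaller dimension) whose open $G$-orbit $G/H_i$ is CH-congruent. First I would establish the rank claim. By Lemma~\ref{L:BM}, the stabilizer $H_i = \mt{Iso}_G(x)$ of a generic point $x$ in $X_i$ sits between a satellite $H_{I(\sigma_i)}$ and its normalizer $N_G(H_{I(\sigma_i)})$, for $\sigma_i$ the (uncolored) face of $\sigma$ corresponding to the orbit $X_i$; in particular $H_i$ and $N_G(H_{I(\sigma_i)})$ have the same rank, so it suffices to bound the rank of the satellite. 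By Remark~\ref{R:toroidal} (Brion--Peyre), $H_{I(\sigma_i)}^{red}\subset H$, hence a maximal torus of $H_{I(\sigma_i)}$ lies in a conjugate of $H$; since $\mt{rk}(H)=\mt{rk}(G)$ and $H$ contains a maximal torus of $G$, we need to see that a maximal torus of $G$ lands inside $H_{I(\sigma_i)}$. This is where I would use that $\sigma$ is uncolored and that $H$ has maximal rank: a maximal torus $T\subset H$ fixes the base point of the open orbit, and the structure of the satellite (kernel of characters of $T$-stable data, as in Lemma~\ref{L:BM}) shows $T\subset H_{I(\sigma_i)}$. Thus $\mt{rk}(H_i)=\mt{rk}(N_G(H_{I(\sigma_i)}))=\mt{rk}(G)$.

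Next I would verify that $G/H_i$ is CH-congruent. Since $\mt{rk}(H_i)=\mt{rk}(G)$, this is immediate from the equivalence (2)$\Leftrightarrow$(3) in Lemma~\ref{L:Brion's C12} — but that lemma is stated for \emph{connected reductive} subgroups, so I must first pass to $H_i^0$, or more precisely to $H_i^{red}$, reducing CH-congruence of $G/H_i$ to that of $G/H_i^{red}$ along the affine-bundle and finite-quotient arguments already used in the proof of Lemma~\ref{L} and in Remark~\ref{R}(4). The maximal-rank condition is preserved under these reductions since $\mt{rk}(H_i^{red})=\mt{rk}(H_i)$.

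With those two facts in hand, I would invoke the fact — recorded implicitly in the discussion preceding Lemma~\ref{L:BM} and in~\cite{BatyrevMoreau} — that a $G$-orbit closure $X_i$ inside a smooth complete toroidal spherical embedding is again a smooth complete toroidal spherical embedding, namely of its own open orbit $G/H_i$ (the toroidal/uncolored condition is inherited by faces of the colored fan). Then Theorem~\ref{T:break it}, applied verbatim with $X$ replaced by $X_i$ and $G/H$ replaced by $G/H_i$, yields exactly the asserted isomorphisms
\[
H_k(X_i-G/H_i,\Q)\cong A_k(X_i-G/H_i)_\Q,\qquad
A_k(X_i)_\Q\cong A_k(X_i-G/H_i)_\Q\oplus A_k(G/H_i)_\Q
\]
for all $k\geq 0$ (noting $H_{2k+1}$ vanishes, so the even-degree statement captures everything).

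The main obstacle I anticipate is the rank computation for the generic stabilizer $H_i$: one must argue carefully that a maximal torus of $G$ actually sits inside the satellite $H_{I(\sigma_i)}$ rather than merely inside $H$, and this genuinely uses both the maximal-rank hypothesis on $H$ and the uncoloredness of the cone (so that the relevant orbit structure is governed by $T$-stable combinatorial data with no interference from colors). Once the rank is pinned down, CH-congruence and the splitting are formal consequences of the already-proven Lemma~\ref{L:Brion's C12} and Theorem~\ref{T:break it}, modulo the standard connected/reductive reductions.
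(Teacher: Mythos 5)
Your reduction of the second assertion to the rank claim is sound and matches the paper: $X_i$ is again a smooth complete toroidal (hence wonderful) embedding of its open orbit $G/H_i$, so once $\mt{rk}(H_i)=\mt{rk}(G)$ is known, Lemma~\ref{L:Brion's C12} gives CH-congruence of $G/H_i$ and Theorem~\ref{T:break it} applied to $X_i$ gives both isomorphisms. The genuine gap is in your rank argument. You propose to show that a maximal torus of $G$ lies inside the satellite $H_{I(\sigma_i)}$, and you correctly flag this as the delicate point --- but the claim is false in general, and the Brion--Peyre inclusion $H_{I(\sigma_i)}^{red}\subset H$ of Remark~\ref{R:toroidal} points in the wrong direction to help. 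Concretely, take $G=\mathbf{SL}_2$ and $H=T$ a maximal torus, so that $X\cong\PP^1\times\PP^1$ with closed orbit $G/B$: the satellite attached to the closed orbit is the horospherical contraction of $T$, whose identity component is the maximal unipotent $U$, which contains no nontrivial torus --- yet the actual isotropy group of a point of the closed orbit is $B$, which does have maximal rank. So the rank of $H_i$ cannot be read off from the satellite; it is the full isotropy group (the kernel described in Lemma~\ref{L:BM}) that must be controlled. Your intermediate assertion that $H_i$ and $N_G(H_{I(\sigma_i)})$ have the same rank is also unjustified, since $H_i$ is the kernel of a homomorphism onto a subgroup of a torus and rank can drop under such kernels.

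The paper takes a different route that sidesteps this entirely: every orbit closure $X_i$ contains the unique closed orbit $G/Q$ of $X$, and viewing $X_i$ as a wonderful embedding of $G/H_i$, the satellite of $H_i$ attached to \emph{that closed orbit} has reductive part the Levi $L_Q$ of $Q$; Remark~\ref{R:toroidal} then gives $L_Q\subset H_i$ up to conjugation, and since $L_Q$ contains a maximal torus of $G$ one gets $\mt{rk}(H_i)=\mt{rk}(G)$. (Notably, this argument does not use the hypothesis $\mt{rk}(H)=\mt{rk}(G)$ at all.) To repair your proof, replace ``a maximal torus of $G$ lands inside the satellite $H_{I(\sigma_i)}$'' by ``a Levi of $Q$ lands inside the generic isotropy group $H_i$,'' extracted from the closed orbit of $X_i$ rather than from the combinatorial data of the face $\sigma_i$ itself.
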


\begin{proof}
Since $X$ is a wonderful variety, any $G$-orbit closure $X_i$ is smooth,
and furthermore, $X_i$ is a wonderful variety itself. 
Therefore, in light of Theorem~\ref{T:break it}, 
our second claim will follow from the first one,
that is, 
the rank of the stabilizer of a generic point in $X_i$ is equal to $\mt{rk} (H) = \mt{rk} (G)$. 

Note that the closed $G$-orbit in $X_i$ is the closed $G$-orbit in $X$. 
Let $G/Q$ denote this closed $G$-orbit, where $Q$ is a parabolic subgroup. 
The satellite of $H$ corresponding to $G/Q$ is $H_\emptyset = Q$. 
It follows from Remark~\ref{R:toroidal} that the Levi subgroup $L_Q$ of $Q$ 
is contained in the stabilizer subgroup in $G$ of a generic point in $X_i$. 
Since the rank $\mt{rk} (L_Q)$ of $L_Q$ is equal to the rank of $G$, the proof of 
our proposition is complete. 
\end{proof}

We are now ready to prove the third result that we announced in the introduction.

\begin{proof}[Proof of Theorem~\ref{I:T3}]

We will apply Theorem~\ref{I:T2}. In the course of the proof of Proposition~\ref{P:break it}
we showed that for every $G$-orbit $Y_i$ in $X$, the stabilizer of a generic point from $Y_i$
contains the Levi subgroup $L_Q$ of the parabolic subgroup $Q$, where $G/Q$ is the closed orbit in $X$. 
Since $T\subset L_Q$, we see that every $G$-orbit in $X$ has a $T$-fixed point.
This finishes the proof. 

\end{proof}

\subsection{Proof of Theorem~\ref{I:T4}}\label{SS:6}

Let $G$ be a connected reductive algebraic group, let $T$ and $B$ denote a maximal torus 
and a Borel subgroup such that $T\subset B \subset G$. 
Let $I$ and $J$ be two subsets from the set of simple roots $\Delta = \Delta(G,B,T)$. 
A {\em double flag variety} is a product of the form $X:=G/P_I\times G/P_J$,
where $P_I$ and $P_J$ respectively are the standard parabolic subgroups determined by the subsets $I$ and $J$.
%this means that $B\subset P_I\cap P_J$.
Starting with the work of Littelmann in~\cite{Littelmann}, it is understood that the cones over these varieties can be used 
for calculating the multiplicities in the tensor products of irreducible representations of $G$. For a good exposition of this idea, see~\cite[Section 2.11]{Timashev}.

The diagonal $G$-action on a double flag variety $X$ is given by $g\cdot (aP_I,bP_J)= (gaP_I,gbP_J)$ for $g,a,b\in G$.
The canonical projection $\pi : X\rightarrow G/P_J$
is $G$-equivariant and it turns $X$ into a 
homogenous fiber bundle over $G/P_J$ with fiber $G/P_I$
at every point $gP_J$ ($g\in G$)
of the base $G/P_J$. 
To distinguish it from the other fibers,
let us denote by $Y$ the fiber $G/P_I$ 
at the `origin' $eP_J$ of $G/P_J$. Then 
any $G$-orbit in $X$ meets $Y$.
Note also that if $g\cdot y \in Y$
for some $g\in G$ and $y\in X$, then 
$g\in P_I$. 
This means that, for every $G$-orbit in $X$, there exists a unique $P_I$-orbit in $G/P_J$, which is 
viewed as a fiber of $\pi$ at $eP_J$.  
It is well-known that these $P_I$-orbits are in 1-1 correspondence with $(W_I,W_J)$-double cosets in $W$,
where $W_I$ and $W_J$, respectively, are the subgroups of $W$ determined by $I$ and $J$, see~\cite[Section 21.16]{Borel}.
In fact, every $P_I$-orbit in $G/P_J$ is of the form $P_I\cdot \dot{w}$, where $\cdot{w} \in N_G(T)$
and $w\in W$ is a minimal length representative of a double 
coset $W_I w W_J$ in $W$.

Let us denote by ${}^IW^J$ the set of minimal coset representatives of the $(W_I,W_J)$-double cosets in $W$. 
It follows from the above discussion that the set of $G$-orbits in $X$ is finite as it is in bijection with ${}^IW^J$.
Furthermore, each $G$-orbit in $X$ is of the form $G\cdot \dot{w} \cong G/\textrm{Stab}_G(\dot{w})$ for some  
$w\in {}^IW^J$.
Note that the stabilizer subgroup of any element of $W$ is a standard parabolic subgroup in $G$, therefore, 
\hbox{$T\subset \textrm{Stab}_G(\dot{w})$}.

We are now ready to prove the final result of our paper.

\begin{proof}[Proof of Theorem~\ref{I:T4}.]
We already know that the $G$-orbits in the double flag variety $X=G/P_I\times G/P_J$ are parametrized by ${}^IW^J$.
It follows from the above discussion that every $G$-orbit in $X$ contains a $T$-fixed point.
By the K\"unneth formula, the (integral) cohomology ring of $X$ is isomorphic to the tensor product of the 
(integra) cohomology rings $H^*(G/P_I)$ and $H^*(G/P_J)$. 
Therefore, after tensoring with $\Q$, Theorem~\ref{I:T2} gives us the $\Q$-vector space decomposition in 
$H^*(G/P_I,\Q)\otimes H^*(G/P_J,\Q) \cong \bigoplus_{w\in{}^IW^J} H^*(G/\textrm{Stab}_G(\dot{w}),\Q)$.
\end{proof}

The {\em complexity} of a group action $G\times X\to X$ is defined as the minimal codimension of a Borel orbit in general position. 
A $G$-variety $X$ is spherical if and only if $X$ is normal and the complexity of the action is zero, see~\cite[Section 3.15]{Timashev}.
For $G:=\mathbf{SL}_n$, the structures of the posets of $G$-orbit closures in the double flag varieties of complexity $\leq 1$ 
are determined in~\cite{Can18,CanTien}. This information can be used for further studying the decompositions described in Theorem~\ref{I:T4}.

\bibliography{References}
\bibliographystyle{plain}

\end{document}